\newcommand\restr[2]{{
  \left.\kern-\nulldelimiterspace 
  #1 
  \vphantom{\big|} 
  \right|_{#2} 
  }}
\newcommand{\vc}{\mathbf{c}}
\newcommand{\ve}{\mathbf{e}}
\newcommand{\vh}{\mathbf{h}}
\newcommand{\vz}{\mathbf{z}}
\newcommand{\vx}{\mathbf{x}}
\newcommand{\vy}{\mathbf{y}}
\newcommand{\vf}{\mathbf{f}}
\newcommand{\vs}{\mathbf{s}}
\newcommand{\vu}{\mathbf{u}}
\renewcommand{\Omega}{\varOmega}
\newcounter{commentcounter}
\long\def\symbolfootnote[#1]#2{\begingroup%
\def\thefootnote{\fnsymbol{footnote}}\footnote[#1]{#2}\endgroup}
\newcommand{\comment}[1]{{\footnotesize\textbf{\textcolor{red}{(C.\arabic{commentcounter})}}\symbolfootnote[4]{\texttt{\textcolor{red}
        {(C.\arabic{commentcounter})~#1}}}}\addtocounter{commentcounter}{1}}
\let\mathbb\undefined  
\DeclareSymbolFont{bbold}{U}{bbold}{m}{n} 
\DeclareSymbolFontAlphabet{\mathbbold}{bbold} 
\newcommand{\exclude}[1]{}
\title{A Polyhedral Study of the  Static Probabilistic Lot-Sizing Problem}
\keywords{chance constraints; lot sizing; valid inequalities; facets; branch and cut; simple recourse reformulation}
\begin{document}

	\maketitle 
	
		\begin{abstract}
		 	We study the polyhedral structure of the static probabilistic lot-sizing  problem and propose valid inequalities that integrate information from the chance constraint and the binary setup variables. We prove that the proposed inequalities subsume existing inequalities for this problem, and they are facet-defining under certain conditions. In addition, we show that they give the convex hull description of a related stochastic lot-sizing problem. We propose a new formulation that exploits the simple recourse structure, which significantly reduces the number of variables and constraints of the deterministic equivalent program. This reformulation  can be applied to  general chance-constrained programs with simple recourse. The computational results show that the proposed inequalities and the new formulation are effective for the the static probabilistic lot-sizing  problems.
		\end{abstract}
	\section{Introduction}
	
	In this paper, we study the static probabilistic lot-sizing (SPLS) problem. Given a joint probability  distribution of random demand over a finite planning horizon, and a service level, $1 - \epsilon$,   SPLS problem aims to find a production plan at the beginning of the planning horizon (before the random demand is realized), so that the  expected total cost of production and inventory is minimized, and the probability of stockout does not exceed $\epsilon$. In this study, we focus on finite probability spaces.

	\cite{on2} introduce the   {\it deterministic} uncapacitated lot-sizing (ULS) problem (\textit{without}  backlogging), which is the problem of finding the optimal plan of production and inventory quantities,  to satisfy the demand in each period of the planning horizon on time.  The authors propose an $O(n^2)$ algorithm for ULS, where $n$ is the number of time periods in the planning horizon. Improved polynomial algorithms  can also be found in \cite{logn} and \cite{nlogn}.  \cite{ls} give a complete linear description of the convex hull of ULS in the original space of variables by the so-called $(\ell, S)$ inequalities.     In addition, \cite{extended} propose  an extended formulation for the ULS problem, which gives the complete linear description of the convex hull of solutions to ULS in the extended space.
	
	 \cite{backlogging} provide the first polyhedral study  of a closely related deterministic ULS problem {\it with} backlogging (ULSB),  in which backorders are allowed in intermediate periods and penalized by shortage costs, and demands must be met at the end of the planning horizon. \cite{convexhull} propose a class of inequalities that generalizes the inequalities of \cite{backlogging} and show that this class of inequalities is enough to give a complete linear description of the convex hull of ULSB.  \cite{gk13} give extended formulations for the deterministic ULSB problem when there is a limit on the number of periods in which shortages occur. 
	 
	 The aforementioned studies assume that the demands are known for each time period of the planning horizon. However, in many applications, these parameters are uncertain, and only the joint  probability distribution of these data is  available.  \cite{yongpei} address a multi-stage stochastic integer programming formulation of the uncapacitated lot-sizing problem under uncertainty. They extend the deterministic $(\ell, S)$ inequalities to the stochastic case. \cite{Guan2006} show that these inequalities are sufficient to describe the convex hull of solutions to the  two-period problem  \cite[see, also][]{DiSumma2008}.  \cite{ahmed03} use the tight extended formulation proposed for the deterministic lot-sizing problem to strengthen the deterministic equivalent formulation of the stochastic lot-sizing problem.    \cite{miller}, \cite{huang} and \cite{ruiwei} propose dynamic  programming algorithms  for solving stochastic uncapacitated lot-sizing problems that run in polynomial time in the input size (number of scenarios and time periods). 
	
	The stochastic lot-sizing model assumes that we have to satisfy the uncertain demand in each time period for every scenario, which may lead to an over-conservative solution with excessive inventory.
	As an alternative, for a given service level, $1-\epsilon$, a chance-constrained lot-sizing formulation, referred to as the {\it static} probabilistic lot-sizing problem (SPLS), ensures that the production schedule, which is   determined at the beginning of the planning horizon before the realization of random demands, meets the demands on time with probability at least $1-\epsilon$.   \cite{2002b} consider a variant of SPLS, where    the total expected cost is approximated by eliminating the holding cost and inventory variables from the objective function. The authors 
	propose a branch-and-bound method that relies on a partial enumeration of the so-called $p$-efficient points \cite[see][]{P90,S92,Concavity,R02}.  \cite[See, also][for a more general probabilistic production and distribution planning problem]{LR07}. The SPLS model {\it with} the inventory costs is solved using a  branch-and-cut algorithm  in \cite{kucukyavuz:12}.     
	\cite{Minjiao} propose  a {\it dynamic} probabilistic lot-sizing model, in which the production schedule is updated based on the scenario realization of the previous time periods.  We refer the reader to \cite{simgebook} for a survey on deterministic, stochastic and probabilistic lot-sizing models.

	Chance-constrained programming (CCP) is a class of optimization problems where the probability of an undesirable outcome is limited by a given threshold, $\epsilon$, (see, e.g.,   \citet{charnes:58}, \citet{charnes:63}, \citet{Miller1},  \citet{P73}). \citet{sample} propose sample-average approximation (SAA)  algorithm for CCPs with general probabilistic distribution  \cite[see, also][]{Cal1,Cal2,Nem,Campi}. The resulting sampled problem can be formulated as a large-scale deterministic mixed-integer program.  However, the weakness of the linear programming relaxation of this  formulation  makes it inefficient to solve with commercial integer programming solvers.

	For unstructured chance-constrained programs (CCP) with random right-hand sides,  \citet{luedtke:mp10},  \citet{kucukyavuz:12} and \cite{abdi} study strong valid inequalities for the deterministic equivalent formulation of the chance constraint. In addition, \citet{luedtke:cc13} and \cite{liu} propose decomposition algorithms for two-stage CCPs with a finite number of scenarios, which show significant improvement in computational performance when solving the deterministic equivalent formulation of the CCPs. CCPs with other special structures are also studied in \cite{SL13},  \cite{Song} and \cite{wind}.
	
	In this paper, we provide a polyhedral study of the static probabilistic lot-sizing problem. Different from earlier studies (summarized in Section \ref{sec:prior}), we derive a class of valid inequalities that synthesize  information from the binary production setup variables and the chance constraint (Section \ref{sec:new1}). As a result, we obtain inequalities that are stronger than those considering the chance constraint and lot-sizing structures separately. We prove that our inequalities are facet-defining under certain conditions. Furthermore, we show that they are sufficient to provide the complete linear description of a related stochastic lot-sizing problem. In Section \ref{sec:newformulation}, we propose a new formulation for SPLS, which greatly reduces the number of variables and constraints of the deterministic equivalent formulation. We also show that the proposed new formulation can be extended to general two-stage chance-constrained programs with simple recourse.   Our computational experiments summarized in Section \ref{sec:comp} show that the proposed methods are effective. 
	
	\section{Problem Formulation}

	Given a planning horizon with length $n$, let $N := \{1, \hdots , n\}$. Also, let $x_i$ be the production setup variable and  $f_i$ be the fixed cost of production at  time period  $i$ for all $i \in N$. In addition, let $y_i$ be the production quantity  and  $c_i$ be the unit cost of production at time period $i$, for all $i \in N$. Let $\xi$ be the uncertain demand. Throughout, we   let $[j] = \{1, 2, \dots , j\}$, for $j \in \mathbb Z_+$.

	The generic model of the static probabilistic lot-sizing problem, which is introduced in \cite{2002b}, can be formulated as a two-stage optimization problem. The first-stage problem is stated as:
	\begin{subequations}
		\label{eq:gen}
		\begin{align}
			\min \; &\vf^\top \vx + \vc^\top \vy + \mathbb  E_\xi  \Big(\Theta_\xi( \vy) \Big) \\
			\label{eq:gen1} \text{s.t.}\; & \mathbb P\big(\sum_{i = 1}^t y_i \geq \sum_{i = 1}^t \xi_i, \; t \in N \big) \geq 1 - \epsilon \\
			\label{eq:gen3}&y_i \leq M_i x_i, & i \in N \\
			\label{eq:gen6}&  \vy  \in \mathbb R_+^n, \vx \in \mathbb B^n,
		\end{align}
	\end{subequations}
	where $M_i$ is a large constant to make \eqref{eq:gen3} redundant when $x_i$ equals to one, for all $i \in N$.  Constraint \eqref{eq:gen1} enforces that the probability of violating the demands from time 1 to $n$ should be less than the user-given risk rate $\epsilon$.   In addition, $\Theta_\xi(\vy)$ is the value function of the second-stage problem given by:		
	\begin{subequations}
		\label{eq:gens}
		\begin{align}
			\Theta_\xi( \vy) = \;	\min \; &  \vh^\top \vs(\xi)  \\
			\label{eq:gens4}& s_t(\xi) \geq \sum_{i = 1}^t(y_i - \xi_i)  &  t \in N\\
			\label{eq:gens6}&\vs(\xi)  \in \mathbb R_+^n,
		\end{align}
	\end{subequations}
	where $\vs(\xi)$ is the vector of second-stage inventory  variables with nonnegative cost vector $\vh$.  In addition, constraints \eqref{eq:gens4}  together with \eqref{eq:gens6} ensure the correct calculation of the inventory level.  Note that 	the second-stage problem has a simple-recourse structure. \cite{Minjiao} propose a related model, in which there may be shortages in the intermediate time periods, but  all demand  must be satisfied by the end of the planning horizon to meet contractual  obligations. Our methods are valid for both variations of  SPLS.
	
	Given a finite scenario set $ \Omega = \{1, \hdots , m\}$, let $\pi_j$ be the probability of scenario $j$, for all $j \in \Omega$. In addition, let $d_{ji}$ be the demand for period $i$ under scenario $j$, for all $i \in N$ and $j \in \Omega$. Let $s_{jt}$ be the inventory at the end of time period $t\in N$ in scenario $j\in \Omega$, which incurs a unit holding cost $h_t$. As is common in SAA methods, throughout the rest of the paper, we assume that each scenario is equally likely, i.e., $\pi_j = \frac{1}{m}$, for all $j \in \Omega$.  Letting $k =\lfloor m\epsilon \rfloor$,  the deterministic equivalent formulation of 
	SPLS is 
	\begin{subequations}
		\label{eq:dep}
		\begin{align}
			\min \; &\vf^\top \vx + \vc^\top \vy + \frac{1}{m} \sum_{j = 1}^m   \vh^\top \vs_j \\
			\label{eq:dep1}	\text{s.t.}\; & \sum_{i = 1}^t y_i \geq \sum_{i = 1}^t d_{ji}(1 - z_j), \; & t \in N, j \in \Omega \\
			\label{eq:dep2}	&\sum_{j = 1}^m  z_j \leq k \\
			\label{eq:dep4}		&y_i \leq M_i x_i, & i \in N \\
			\label{eq:dep5}		& s_{jt} \geq \sum_{i = 1}^t(y_i - d_{ji})  &  t \in N, j \in \Omega\\
			\label{eq:dep6}		&\vs_j\in \mathbb R^n_+,  j \in \Omega , \vy  \in \mathbb R_+^n,\vx \in \mathbb B^n,  \vz \in \mathbb B^m,
		\end{align}	
	\end{subequations}
	where we introduce additional logical variable $z_j$, which equals  0 if the demand in each time period under scenario $j$ is satisfied, and 1 otherwise, for all $j \in \Omega$.  In addition, $M_i = \max_{j \in \Omega} D_{jin}$, for all $i \in N$, where $D_{jin} = \sum_{p = i}^n d_{jp}$, for all $j \in \Omega$. Furthermore, the cardinality constraint \eqref{eq:dep2} along with the big-$M$ constraint \eqref{eq:dep1} represents the chance constraint in the equal probability case.  However, this deterministic equivalent formulation is hard to solve due to the  large number of scenario-based variables and constraints, and the big-$M$ type of constraints \eqref{eq:dep1} and  \eqref{eq:dep4}, which yield  weak linear programming relaxations. In the next section, we survey the existing valid inequalities for this class of problems, and then  propose  new valid inequalities.

	\exclude{
		As observed by  \cite{luedtke:cc13}, we can apply mixing inequalities to strengthen the constraints \eqref{eq:dep1}, which have the so-called {\it mixing structure} \cite[]{Gunluk}. The remaining question is how to strengthen  constraint  \eqref{eq:dep4} based on the structure of the lot sizing problem. 
	}

	\section{Valid Inequalities}
	
	In this section, we propose a class of strong valid inequalities for SPLS that subsume known inequalities for this problem.   Before we describe the proposed inequalities, we review existing inequalities for SPLS adapted from the  $(\ell, S)$ inequalities for the deterministic lot-sizing problem, and the mixing inequalities for the deterministic equivalent of chance-constrained programs with random right-hand sides. 	
	
	\subsection{Existing Studies} \label{sec:prior}
	
	Consider the feasible region of \eqref{eq:dep} in the space of $ ({\mathbf x, \mathbf y, \mathbf z})$ variables. Let  $P  =   \{ ({\mathbf x, \mathbf y, \mathbf z}) \in \mathbb B^n \times  \mathbb R_+^n \times \mathbb B^m  \;|\;  \eqref{eq:dep1} - \eqref{eq:dep4} \}.$		
	First, note that we can  adapt the  $(\ell, S)$ inequalities \cite[]{ls} for the deterministic lot-sizing problem, to obtain the following valid inequalities for its chance-constrained counterpart:
	\begin{align}
		\label{eq:weakcut2}
		\sum_{i \in S} y_i + \sum_{i \in \bar S}D_{ji\ell}x_i \geq D_{j1\ell}(1 - z_j), \quad j \in \Omega,
	\end{align}
	where $\ell \in N$, $S \subseteq [\ell]$, and  $\bar S =  [\ell] \setminus S$. To see the validity of \eqref{eq:weakcut2}, note that if $z_j = 0$, then the demand in each time period of the $j$-th scenario must be met, and \eqref{eq:weakcut2} reduces to $(\ell, S)$ inequalities for the $j$-th scenario. Otherwise, if $z_j = 1$, then the inequality is trivially valid.   However, this class of inequalities contains the undesirable big-$M$ terms, which lead to weak  linear programming relaxations. 
	Furthermore, they only contain information from a single scenario at a time. We will address the question on the strength of inequalities \eqref{eq:weakcut2} for a special case in Proposition \ref{prop:k=0}. Similarly, we can also apply the modified extended formulation of deterministic uncapacitated lot-sizing problem studied in \cite{extended} to the SPLS, with the added big-M terms. However, this simple adaption only uses the information from a single scenario, which may not be strong for the deterministic equivalent program  where we have to consider the intersection of the whole scenarios set. In addition, the number of variables and constraints  explode as we increase $m$ and $n$.

	\exclude{of the deterministic equivalent program; second, even assuming that $\epsilon = 0$, i.e., $z_j = 0$, for all $j \in \Omega$, and  the big-$M$ term disappears in \eqref{eq:weakcut2},  the $(\ell, S)$ inequalities for a single scenario are not sufficient to define the convex hull of the deterministic equivalent formulation for a stochastic lot-sizing problem. 	}

	Second, since the big-$M$ inequalities \eqref{eq:dep1} contain the mixing structure, we can apply the mixing inequalities to strengthen the linear programming relaxation of \eqref{eq:dep}.   
	To simplify notation, let $D_{ji} = D_{j1i}$, for all $i \in N$ and $j \in \Omega$.  In addition, for all $i \in N$ and $j \in \Omega$, let $\sigma$ be a permutation of the scenarios such that	
	$		D_{\sigma_{i(1)}i} \geq D_{\sigma_{i(2)}i}  \geq \cdots \geq D_{\sigma_{i(m)}i}, $ 
	where $D_{\sigma_{i(j)}i}$ is the $j$-th largest cumulative demand for the $i$-th time period. To further simplify the notation, let $ D_{\sigma_{i(j)}} =D_{\sigma_{i(j)}i} $. Let  $T^*_i = \{\sigma_{i(1)}, \sigma_{i(2)}, \hdots , \sigma_{i(k)}\}$, for all $i \in N$. Throughout the paper, when we define a set such as $T:=\{t_1,t_2,\ldots,t_a\}$, it should be understood that $a$ is the cardinality of $T$. 
	\begin{proposition}[adapted from \cite{luedtke:cc13}]
		For $\ell \in N$,  let $T_\ell := \{t_{\ell(1)}, t_{\ell(2)}, \hdots , t_{\ell(a_\ell)}\} \subseteq T_\ell^*$,  where 
		$D_{t_{\ell(1)}} \geq D_{t_{\ell(2)}} \geq \cdots \geq D_{t_{\ell(a_\ell)}} $. The {\it basic} mixing inequalities
		\begin{align}\label{eq:mixing}
			\sum_{i = 1}^\ell y_i + \sum_{j = 1}^{a_\ell} (D_{t_{\ell (j)}} - D_{t_{\ell (j + 1)}})  z_{t_{\ell (j)}}\geq  D_{t_{\ell (1)}},				
		\end{align}
		are valid for $P$, where $t_{\ell (a_\ell + 1)} = \sigma_{\ell (k + 1)}$.
	\end{proposition}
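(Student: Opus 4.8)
The plan is to verify that \eqref{eq:mixing} holds at every point of $P$ by a direct case analysis driven by the values of the binaries $z_{t_{\ell(j)}}$. Writing $Y := \sum_{i=1}^\ell y_i$, the two ingredients are that the big-$M$ constraints \eqref{eq:dep1} specialize at period $\ell$ to $Y \geq D_{j\ell}(1 - z_j)$ for every $j \in \Omega$, and that the cardinality constraint \eqref{eq:dep2} permits at most $k$ of the $z_j$ to equal one. I would first record two monotonicity facts that make the coefficients of \eqref{eq:mixing} nonnegative: the differences $D_{t_{\ell(j)}} - D_{t_{\ell(j+1)}}$ for $j < a_\ell$ are nonnegative by the assumed ordering, while the final difference $D_{t_{\ell(a_\ell)}} - D_{t_{\ell(a_\ell+1)}}$ is nonnegative because $t_{\ell(a_\ell)} \in T_\ell^* = \{\sigma_{\ell(1)}, \dots, \sigma_{\ell(k)}\}$ forces $D_{t_{\ell(a_\ell)}} \geq D_{\sigma_{\ell(k)}} \geq D_{\sigma_{\ell(k+1)}} = D_{t_{\ell(a_\ell+1)}}$. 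Since each $z_{t_{\ell(j)}} \geq 0$, every summand in \eqref{eq:mixing} is then nonnegative, which lets me freely drop terms.

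Fix a feasible point $(\mathbf{x}, \mathbf{y}, \mathbf{z}) \in P$ and let $r$ be the smallest index in $\{1, \dots, a_\ell\}$ with $z_{t_{\ell(r)}} = 0$, should one exist. In that case I would retain only the first $r$ terms of the sum (discarding the nonnegative tail), substitute $z_{t_{\ell(1)}} = \dots = z_{t_{\ell(r-1)}} = 1$ and $z_{t_{\ell(r)}} = 0$, and telescope:
\[
\sum_{j=1}^{r-1}\bigl(D_{t_{\ell(j)}} - D_{t_{\ell(j+1)}}\bigr) = D_{t_{\ell(1)}} - D_{t_{\ell(r)}}.
\]
Because $z_{t_{\ell(r)}} = 0$, constraint \eqref{eq:dep1} for scenario $t_{\ell(r)}$ gives $Y \geq D_{t_{\ell(r)}}$, so the left-hand side of \eqref{eq:mixing} is at least $Y + D_{t_{\ell(1)}} - D_{t_{\ell(r)}} \geq D_{t_{\ell(1)}}$, as required (the boundary case $r=1$ reduces to $Y \geq D_{t_{\ell(1)}}$ directly).

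The remaining case, where $z_{t_{\ell(j)}} = 1$ for all $j \in \{1, \dots, a_\ell\}$, is the step I expect to be the crux, since here no scenario in $T_\ell$ is known to be satisfied and the cardinality budget must be invoked. Now the full sum telescopes to $D_{t_{\ell(1)}} - D_{t_{\ell(a_\ell+1)}} = D_{t_{\ell(1)}} - D_{\sigma_{\ell(k+1)}}$, so it suffices to show $Y \geq D_{\sigma_{\ell(k+1)}}$. The key observation is that the $k+1$ distinct scenarios $\sigma_{\ell(1)}, \dots, \sigma_{\ell(k+1)}$ carrying the largest cumulative demands at period $\ell$ cannot all be violated, since \eqref{eq:dep2} allows at most $k$ of the $z_j$ to be one; hence some $\sigma_{\ell(p)}$ with $p \leq k+1$ has $z_{\sigma_{\ell(p)}} = 0$, and \eqref{eq:dep1} yields $Y \geq D_{\sigma_{\ell(p)}} \geq D_{\sigma_{\ell(k+1)}}$ by the decreasing ordering of the $D_{\sigma_{\ell(\cdot)}}$. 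Combining the two cases establishes validity over all of $P$. The only subtleties to check carefully are that discarding the tail of the sum is legitimate—guaranteed by the nonnegativity of the coefficients above—and that the escape term $t_{\ell(a_\ell+1)} = \sigma_{\ell(k+1)}$ is correctly the $(k+1)$-st largest cumulative demand, which is precisely what ties the telescoped bound to the cardinality argument.
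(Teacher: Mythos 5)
Your proof is correct. Note, however, that the paper itself offers no proof of this proposition: it is stated as ``adapted from'' Luedtke (2013) and the validity argument is simply deferred to that reference, so there is nothing in the paper to compare against line by line. Your self-contained argument is the standard one for mixing inequalities, correctly instantiated for this model: the split into the case where some $z_{t_{\ell(r)}}=0$ (telescoping the first $r-1$ coefficients and invoking \eqref{eq:dep1} at period $\ell$ for scenario $t_{\ell(r)}$) and the case where all of $T_\ell$ is violated (telescoping the full sum down to $D_{t_{\ell(1)}}-D_{\sigma_{\ell(k+1)}}$ and using the cardinality constraint \eqref{eq:dep2} to guarantee $\sum_{i=1}^{\ell}y_i\geq D_{\sigma_{\ell(k+1)}}$) covers all feasible points. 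You also correctly verified the two facts that the brevity of the statement hides: that the last coefficient $D_{t_{\ell(a_\ell)}}-D_{\sigma_{\ell(k+1)}}$ is nonnegative precisely because $T_\ell\subseteq T_\ell^*=\{\sigma_{\ell(1)},\dots,\sigma_{\ell(k)}\}$, which is what legitimizes dropping the tail of the sum, and that the sentinel $t_{\ell(a_\ell+1)}=\sigma_{\ell(k+1)}$ is exactly the threshold delivered by the cardinality argument. What your write-up buys over the paper's citation is a complete, elementary verification that the adaptation to the lot-sizing setting (cumulative production $\sum_{i=1}^{\ell}y_i$ playing the role of the mixing variable) introduces no gaps.
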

	\cite{luedtke:cc13}, \cite{kucukyavuz:12} and \cite{abdi} provide extensions of the basic mixing inequalities \eqref{eq:mixing} for equal and general probability cases. However, the mixing  inequalities based on cumulative production quantities do not provide any strengthening for fractional $\vx$. 	 Hence, an interesting research question is whether we can combine the mixing inequalities and the $(\ell, S)$ inequalities to obtain valid inequalities that cut off fractional ($\vx,\vz$). Next, we provide an affirmative  answer to this question. 
	
	\subsection{New Valid Inequalities} \label{sec:new1}
	In this section, we propose a class of valid inequalities which subsumes inequality  \eqref{eq:mixing}. In addition, we study the strength of the new inequalities and provide a polynomial separation algorithm, which is exact under certain conditions.
	

	\begin{proposition}
		For $\ell \in N$, let $S \subseteq [\ell]$, $\bar S = [\ell] \setminus S$, and let $T_{i-1} := \{t_{i-1(1)}, t_{i-1(2)}, \hdots , t_{i-1(a_{i-1})}\} \subseteq T_{i-1}^*$,  where  
		$D_{t_{i-1(1)}} \geq D_{t_{i-1(2)}} \geq \cdots \geq D_{t_{i-1(a_{i-1})}}$, for all $ i \in (\bar S\setminus\{1\})\cup\{\ell+1\}$. In addition, we fix $t_{\ell(1)} = \sigma_{\ell(1)} $.   Let $\bar T=(\cup_{i \in \bar S} T_{i - 1} )\cup  T_\ell$.  The inequality
		\begin{align}
			\label{eq:newvalid}
			\sum_{i \in S} y_i  + \sum_{i \in \bar S}(D_{t_{\ell(1)}} - D_{t_{i - 1(1)}}) x_i + \sum_{j \in \bar T} \bar \alpha_j z_j \geq  D_{t_{\ell(1)}}
		\end{align}
		is valid for $P$, 	where ${t_{i-1(a_{i-1} + 1)}} = {\sigma_{i-1(k + 1)}}$, for all $i \in (\bar S\setminus\{1\})\cup\{\ell+1\}$, 
		\begin{align*}
			\alpha_{ji}  = 
			\begin{cases}
				0,   &\text{ if } i=\ell, j\not\in  T_\ell, \text{ or if } i+1\in \bar S, j\not\in  T_{i}, \\
				D_{t_{\ell(p)}} - D_{t_{\ell(p + 1)}}, & \text{ if } i=\ell, j= t_{\ell(p)} \in  T_\ell \text{ for some } p \in[a_\ell], \\
				D_{t_{i(p)}} - D_{t_{i (p + 1)}}, & \text{ if }  i+1 \in \bar S, j = t_{i (p)} \in T_{i } \text{ for some } p \in[a_{i}], 
			\end{cases}
		\end{align*}
		and $\bar \alpha_{j}  =   \max \Big\{	\max_{i \in \bar S} \{\alpha_{j(i-1)} \},  \alpha_{j\ell}  \Big\} $ for $j\in \bar T$.

		\exclude{
			\begin{align*}
				\alpha_{j}  = 
				\begin{cases}
					0,     \quad \;\; \text{if } j \not \in (\cup_{i \in \bar S} T_{i - 1} )\cup  T_\ell, \\
					\max \Big\{	\max_{i \in \bar S} \{\alpha_{ji-1} \},  \alpha_{j\ell}  \Big\}  , \quad \;\; \text{otherwise, }
				\end{cases}
			\end{align*}
			\comment{if a scenario $j$ does not belong to $T_\ell$ or $T_{i - 1}$, for all $i \in \bar S$, then it does not show up in the left-hand side in our inequality. Otherwise, the coefficient of scenario $j$ equals to the maximum of the ``mixing coefficient'' of scenario $j$ obtained from $T_\ell$ and $T_{i - 1}$, for all $i \in \bar S$}	where $\alpha_{j\ell} = 0$ if $j \not \in T_\ell$. Otherwise, $\alpha_{j\ell} = D_{t_{\ell(p)}} - D_{t_{\ell(p + 1)}}$,   where $j = t_{\ell(p)} \in T_\ell$, for some  $p = 1, 2, \dots, a_\ell $. Similarly, for all $i \in \bar S$, let $\alpha_{ji - 1} = 0$, if $j \not \in T_{i - 1}$.  Otherwise, let $\alpha_{ji - 1} = D_{t_{i - 1(p)}} - D_{t_{i - 1(p + 1)}}$,  where $j = t_{i - 1(p)} \in T_{i - 1}$, for some  $p = 1, 2, \dots, a_{i - 1} $.
		}
	\end{proposition}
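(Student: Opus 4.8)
The plan is to verify \eqref{eq:newvalid} directly: fix an arbitrary feasible point $(\vx,\vy,\vz)\in P$ and show that its left-hand side is at least $D_{t_{\ell(1)}}$. The entire argument rests on the already-established basic mixing inequalities \eqref{eq:mixing}, which I invoke not only at level $\ell$ but also at each level $i-1$ for $i\in\bar S$ (the proposition allows any index in $N$). Two preliminary observations drive everything. First, every $x$-coefficient is nonnegative: since demands are nonnegative, cumulative demand is nondecreasing in the period index, and the fixing $t_{\ell(1)}=\sigma_{\ell(1)}$ makes $D_{t_{\ell(1)}}=\max_{j\in\Omega}D_{j\ell}$ the global largest cumulative demand at period $\ell$, so $D_{t_{\ell(1)}}\ge D_{t_{i-1(1)}\ell}\ge D_{t_{i-1(1)}}\ge 0$ for every $i\in\bar S$. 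Second, from the definition $\bar\alpha_j=\max\{\max_{i\in\bar S}\alpha_{j(i-1)},\alpha_{j\ell}\}$ together with $\vz\ge\vzero$ and $T_\ell,\,T_{i-1}\subseteq\bar T$, the aggregated mixing term dominates each individual one, i.e.\ $\sum_{j\in\bar T}\bar\alpha_j z_j\ge\sum_{j\in T_\ell}\alpha_{j\ell}z_j$ and $\sum_{j\in\bar T}\bar\alpha_j z_j\ge\sum_{j\in T_{i-1}}\alpha_{j(i-1)}z_j$ for each $i\in\bar S$.

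The core is a dichotomy on the setup variables restricted to $\bar S$. If $x_i=0$ for all $i\in\bar S$, then \eqref{eq:dep4} forces $y_i=0$ on $\bar S$, so $\sum_{i\in S}y_i=\sum_{i=1}^\ell y_i$ and the $x$-terms vanish; the left-hand side equals $\sum_{i=1}^\ell y_i+\sum_{j\in\bar T}\bar\alpha_j z_j$, which by the domination above is at least $\sum_{i=1}^\ell y_i+\sum_{j\in T_\ell}\alpha_{j\ell}z_j\ge D_{t_{\ell(1)}}$, the last step being \eqref{eq:mixing} at level $\ell$. In the complementary case let $i^*=\min\{i\in\bar S:x_i=1\}$. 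Retaining only this term and discarding the remaining nonnegative $x$-terms gives $\sum_{i\in\bar S}(D_{t_{\ell(1)}}-D_{t_{i-1(1)}})x_i\ge D_{t_{\ell(1)}}-D_{t_{i^*-1(1)}}$, so it remains to prove $\sum_{i\in S}y_i+\sum_{j\in\bar T}\bar\alpha_j z_j\ge D_{t_{i^*-1(1)}}$. Because $i^*$ is the smallest index of $\bar S$ carrying a setup, every $i\in\bar S$ with $i<i^*$ has $x_i=0$ and hence $y_i=0$, whence $\sum_{i\in S}y_i\ge\sum_{i=1}^{i^*-1}y_i$; combining this with the mixing-term domination at level $i^*-1$ reduces the claim to \eqref{eq:mixing} at level $i^*-1$. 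The boundary value $i^*=1$ is immediate, since then the coefficient of $x_1$ is $D_{t_{\ell(1)}}$ itself (with the convention $D_{t_{0(1)}}=0$), already meeting the right-hand side.

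I expect the main obstacle to be the coefficient bookkeeping rather than the structural idea: one must check carefully that for the level selected in each case ($\ell$ or $i^*-1$) the relevant index set ($T_\ell$ or $T_{i^*-1}$) is genuinely contained in $\bar T$ and that $\bar\alpha_j$ dominates the corresponding $\alpha_{j\ell}$ or $\alpha_{j(i^*-1)}$ for \emph{every} scenario $j$, including scenarios lying in several of the $T_{i-1}$ simultaneously, where only the maximal coefficient is kept. A secondary point requiring care is the degenerate case in which some $T_{i-1}$ is empty: there the stated convention $t_{i-1(a_{i-1}+1)}=\sigma_{i-1(k+1)}$ must be shown to keep the level-$(i-1)$ mixing inequality meaningful and valid, where it reduces to the bound that cumulative production through period $i-1$ is at least the $(k+1)$-st largest cumulative demand, a consequence of the cardinality constraint \eqref{eq:dep2}.
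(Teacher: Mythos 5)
Your proof is correct and follows essentially the same route as the paper's: the same dichotomy on whether $x_i=0$ for all $i\in\bar S$ versus taking the smallest $i^*\in\bar S$ with $x_{i^*}=1$, the same reduction to the basic mixing inequality \eqref{eq:mixing} at level $\ell$ or at level $i^*-1$, and the same use of the definition of $\bar\alpha_j$ to dominate the relevant individual mixing coefficients. Your explicit treatment of the nonnegativity of the $x$-coefficients, the boundary case $i^*=1$, and the degenerate case $T_{i-1}=\emptyset$ only makes the argument slightly more complete than the paper's version.
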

	
	\begin{proof}
		Suppose that $x_i = 0$, for all $i \in \bar S$. Then inequality \eqref{eq:newvalid} reduces to:
		\begin{align*}
			\sum_{i \in S}  y_i   + \sum_{j \in \bar T} \bar \alpha_j z_j     
			& \geq  \sum_{i \in S}  y_i   + \sum_{j = 1}^{a_\ell} (D_{t_{\ell (j)}} - D_{t_{\ell (j + 1)}})  z_{t_{\ell (j)}} 	\geq   D_{t_{\ell(1)}},
		\end{align*}	
		where the first inequality follows from the definition of $\bar \alpha_j$, and the second inequality follows from the validity of the mixing inequality \eqref{eq:mixing} for  time period $\ell$ when $x_i=0$ for all $i\in \bar S$.  	Otherwise, let $i' \in \bar S$ be the smallest index in $\bar S$ such that $x_{i'} = 1$. Then we have:
		\begin{align*}
			\sum_{i \in S} y_i  &+ \sum_{i \in \bar S \setminus \{i'\}} (D_{t_{\ell(1)}} - D_{t_{i - 1(1)}}) x_i + \sum_{j\in \bar T}  \bar \alpha_j z_j   
			&\geq \sum_{i \in S} y_i +   \sum_{j = 1}^{a_{i' - 1}} (D_{t_{i' - 1 (j)}} - D_{t_{i' - 1(j + 1)}} ) z_{t_{i' - 1(j)}}   \geq D_{t_{i' - 1(1)}},
		\end{align*}
		where the first inequality follows from the nonnegativity of $(D_{t_{\ell(1)}} - D_{t_{i - 1(1)}})$ and the definition of $\bar \alpha_j$, since $t_{\ell(1)} = \sigma_{\ell(1)} $.  In addition, the second inequality follows  from the validity of mixing inequality \eqref{eq:mixing} for period $i'$ given that $y_{i} = 0$ for $i\in \bar S, i<i'$. 
	\end{proof}

	\begin{example}\label{ex:ex1}
		
		Let $k = 2$, $n = 2$,  $m = 5$,  and consider the demand data given in Table \ref{tab:1}.		
					\begin{table}[hbt]
						\centering
						\caption{Data for Example \ref{ex:ex1}.}
						\setlength{\tabcolsep}{10pt}
						\label{tab:1}	\begin{tabular}{l*{6}{c}r}
							$ \text{Scenarios} $              & 1 & 2 & 3 & 4 &5 \\
							\hline
							$d_1$ & 6 & 3 & 1 & 2 &4 \\
							$d_2$            & 1 & 6 & 10 & 8   &5\\
							$d_1 + d_2$          & 7 & 9 & 11 & 10 &  9 \\
						\end{tabular} 	
					\end{table}

		 For $\ell = 2$,  let $T_\ell = \{3, 4 \}$, $T_{\ell - 1} =  T_{1}= \{1, 5\}$, $S= \{1\}$, and $\bar S = \{2\}$.	 According to the definition,  $\bar T = \{1, 3, 4, 5\}$. 
			Since $1 \in \bar T$, and scenario 1 is the scenario with the largest demand in the first time period, $\alpha_{11} = D_{t_{1(1)}} - D_{t_{1(2)}} = D_{\sigma_{1(1)}} - D_{\sigma_{1(2)}} = 6 - 4 = 2$. In addition, since $1 \not \in T_\ell$, we have $\alpha_{12} = 0$. Hence, we have $\bar \alpha_1 = \max \{\alpha_{11}, \alpha_{12}\} = 2$.  Since $3 \in \bar T$, and $3 \not \in T_1$, we have $\alpha_{31} = 0$. In addition, since $3 \in T_\ell$, and it is the scenario with the largest cumulative demand at time period $\ell$, we have $\alpha_{32}= D_{t_{2(1)}} - D_{t_{2(2)}}  = D_{\sigma_{2(1)}} - D_{\sigma_{2(2)}} = 11 - 10 = 1$. Hence, we have $\bar \alpha_{3} =\max \{\alpha_{31}, \alpha_{32}  \} = 1 $. Similarly, $\bar \alpha_4 = \max \{\alpha_{41}, \alpha_{42}  \} = \max \{0, D_{t_{2(2)}} - D_{t_{2(3)}}   \}   = \max \{0, D_{\sigma_{2(2)}} - D_{\sigma_{2(3)}}   \} = 1$, and   $\bar \alpha_5 = \max \{\alpha_{51}, \alpha_{52}  \} = \max \{D_{t_{1(2)}} - D_{t_{1(3)}},  0  \}   = \max \{D_{\sigma_{1(2)}} - D_{\sigma_{1(3)}},  0  \} = 1  $. Hence, the proposed inequality for this choice of parameters is:
		\begin{align}
			\label{eq:ex3.1}
			y_1 +  5x_2  + 2z_1 + z_3 + z_4 +  z_5 \geq 11.
		\end{align}

	\end{example}
	
	Next  we show the strength of the proposed inequalities. 
	\begin{proposition}	\label{prop:eq1facet}
		Inequalities \eqref{eq:newvalid} are facet-defining for $conv(P)$ if		
		\begin{enumerate}
			\item   $\bar S \not = \emptyset$ and $1 \in S$;
			\item  $T_{p - 1} \cap (T^*_{q - 1} \cup T^*_\ell  )= \emptyset$,  $T_{\ell} \cap T^*_{q - 1} = \emptyset$, for all $p \not = q$, and $p, q \in \bar S $;
			\item  $D_{t_{\ell(1)}} - D_{t_{i - 1(a_{i - 1} + 1)}} < M_i$, for all $i \in \bar S$;
			\item $d_{ji} > 0$, for all $j \in \Omega$ and $i \in N$. 			
		\end{enumerate}	
	\end{proposition}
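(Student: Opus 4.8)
The plan is to prove the facet result by the indirect method, after first computing the affine hull of $P$. The key structural fact is that, since $1\le k<m$ and $d_{j1}>0$ for every $j$ (condition~4), every feasible point has at least $m-k\ge1$ scenarios with $z_j=0$, and for any such scenario the $t=1$ case of \eqref{eq:dep1} forces $y_1\ge d_{j1}>0$; together with \eqref{eq:dep4} this forces $x_1=1$. I would argue that $x_1=1$ is the \emph{only} implicit equality, so that $\dim\conv(P)=2n+m-1$ and the affine hull of $P$ is $\{x_1=1\}$. Consequently, to show \eqref{eq:newvalid} is facet-defining it suffices to show that every equation $\vmu^\top\vx+\vnu^\top\vy+\vrho^\top\vz=\pi$ satisfied by all points of the face $F:=\{(\vx,\vy,\vz)\in P:\eqref{eq:newvalid}\text{ is tight}\}$ is, up to a positive scalar $\lambda$ and a free multiple of $(x_1-1)$, identical to \eqref{eq:newvalid}; that is, all coefficients agree with those of \eqref{eq:newvalid} except $\mu_1$, which is unconstrained precisely because $1\in S$ (condition~1) keeps $x_1$ out of \eqref{eq:newvalid}.

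I would next fix a canonical point $w^0\in F$ at which \eqref{eq:newvalid} is tight, obtained by taking $x_i=0$ for all $i\in\bar S$ and choosing $\sum_{i\in S}y_i=D_{t_{\ell(1)}}$ together with the nested $z$-pattern that makes the mixing inequality \eqref{eq:mixing} for period $\ell$ tight (the first case of the validity proof). From $w^0$ the $\vy$-coefficients fall out: for $i\notin S$ with $i\le\ell$ and for $i>\ell$, I can increase $y_i$ while remaining in $F$, so $\nu_i=0$; and transferring a small quantity $\Delta$ of production between two periods of $S$ keeps a point tight and feasible, forcing $\nu_i$ to be a common constant on $S$, which I normalise to $\lambda=1$. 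The $\vx$- and $\vz$-coefficients I would then isolate by toggling one binary coordinate at a time between two tight points: toggling $x_i$ to $0$ for $i\in S\setminus\{1\}$ or $i>\ell$ (with $y_i=0$) stays in $F$ and gives $\mu_i=0$; for $i\in\bar S$, comparing the tight configuration with $x_i=0$ against the one with $x_i=1$ (where period $i$ now supplies production and the governing mixing inequality switches to period $i$, as in the second case of the validity proof) yields $\mu_i=D_{t_{\ell(1)}}-D_{t_{i-1(1)}}$, with condition~3 guaranteeing that this $x_i=1$ point respects the big-$M$ bound \eqref{eq:dep4}. Finally, toggling $z_j$ for $j\notin\bar T$ (subject to \eqref{eq:dep2}) preserves tightness so $\rho_j=0$, while for $j\in\bar T$ the mixing-tight points give $\rho_j=\bar\alpha_j$; here condition~2 is decisive, since the disjointness of the sets $T_{i-1}$ and $T_\ell$ means each $j\in\bar T$ is governed by a single index, so $\bar\alpha_j$ reduces to one unambiguous mixing coefficient that can be read off cleanly. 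Evaluating the equation at $w^0$ then gives $\pi=D_{t_{\ell(1)}}$, completing the proportionality and hence the facet claim.

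The hard part, and the most error-prone bookkeeping, will be the simultaneous handling of the cardinality constraint \eqref{eq:dep2} and the nested mixing tightness: every single-coordinate toggle used to isolate a coefficient must be realised by a pair of feasible points that differ only in that coordinate while keeping both $\sum_j z_j\le k$ and \eqref{eq:newvalid} at equality. Producing such pairs requires ``making room'' in the active scenario set by swapping scenarios in and out, and it is precisely conditions~2 and~4 (disjoint $T$-sets and strictly positive demands) that guarantee these swaps exist and keep the constructed points distinct and feasible against \eqref{eq:dep1} and \eqref{eq:dep4}. I would therefore expect to spend the bulk of the proof verifying feasibility and tightness of each constructed point, rather than on the coefficient algebra itself.
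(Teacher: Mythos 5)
Your plan is sound, but it takes a genuinely different route from the paper: you use the indirect (hyperplane) method, whereas the paper proves the result directly by exhibiting $2n+m-1$ affinely independent points of $P$ that satisfy \eqref{eq:newvalid} at equality. Both arguments rest on the same inventory of tight feasible points --- the nested mixing-tight configurations for period $\ell$, the analogous configurations with $x_i=1$ and $y_i=D_{t_{\ell(1)}}-D_{t_{i-1(j)}}$ for $i\in\bar S$ (with the $\triangle$-perturbation that condition~3 makes room for), the production transfers within $S$, the single-scenario toggles $\vz=\ve_j$ for $j\notin\bar T$, and the pairs isolating $x_i,y_i$ for $i>\ell$ --- so neither approach saves you from the feasibility and tightness bookkeeping you correctly identify as the bulk of the work. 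What the trade-off buys: your route replaces the verification that the listed points are affinely independent (which the paper asserts but does not spell out) by a sequence of coefficient identities $\nu_i=0$, $\mu_i=D_{t_{\ell(1)}}-D_{t_{i-1(1)}}$, $\rho_j=\bar\alpha_j$, each certified by a pair of tight points differing in essentially one coordinate; this makes the role of each hypothesis more transparent (condition~1 frees $\mu_1$ via the implicit equality $x_1=1$, condition~2 disambiguates $\bar\alpha_j$, condition~3 enables the $y_i$-perturbation under the big-$M$ bound, condition~4 forces $x_1=1$ and gives the dimension count). One caution: your claim that $\dim\conv(P)=2n+m-1$ with affine hull $\{x_1=1\}$ is itself a nontrivial assertion requiring $2n+m$ affinely independent points of $P$; the paper asserts it equally tersely, so this is not a gap relative to the paper, but in a complete write-up you would need to supply it. Also note that for $i\in\bar S$ the deduction $\nu_i=0$ cannot come from increasing $y_i$ at a point with $x_i=0$ (that would change $x_i$ too); it must come from two tight points that both have $x_i=1$, which is exactly where condition~3 enters.
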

	\begin{proof}
		First, note that $dim(P) = 2n + m - 1$, assuming that $d_{ji} > 0$, for all $j \in \Omega$ and $i \in N$, since $x_1 = 1$ when demands are positive at period 1, and  backordering is not allowed in $n - k$ scenarios.	To show that  inequality \eqref{eq:newvalid} is facet-defining under conditions {\it (i)-(iv)}, we need to find $2n + m - 1$ affinely independent points $(\mathbf x, \mathbf y, \mathbf z)$ that satisfy \eqref{eq:newvalid} at equality.  
		Let $g(t_{i(j)})$, for all $t_{i(j)} \in T_i$ and $i+1 \in \bar S\cup\{\ell+1\}$,  be a unique mapping such that scenario $t_{i(j)}$ has the $g(t_{i(j)})$-th largest cumulative demand at time period $i$.	 Also,   for $p \in [\ell]$, $i\in \bar T$ and $j \in [a_i + 1]$, let  $\mathbf{\bar y}_{j}^p$ be an $n$-dimensional vector such that $\bar y^p_{j1} = D_{t_{p(j)}}$  and  $\bar y^p_{ji} = 0$, for all $i = 2, \hdots , n$. 
		
		First, consider the feasible points: 
		$ (\mathbf e_1 + \ve_{\ell+ 1} , \mathbf{\bar y}_{j}^\ell + \ve_{\ell+ 1} M_{\ell + 1} , \sum_{i = 1}^{g(t_{\ell(j)}) - 1}  \mathbf {e}_{\sigma_{\ell(i)}}),$ for $ j  \in [a_\ell + 1]$,
		where $\mathbf e_j$ is the $j$-th unit vector with appropriate dimension. These $a_\ell + 1$ points 
		are affinely independent and satisfy inequality \eqref{eq:newvalid} at equality. 
		Next,  consider the  set of points: 
		$	 (\mathbf e_1 + \ve_{\ell + 1}, \mathbf{\bar y}_{1}^\ell + \ve_{\ell+ 1} M_{\ell + 1}, \mathbf {e}_j ),$ $ \forall j = \Omega \setminus \bar T.$
		These $m - a_\ell - \sum_{i \in \bar S} a_{i - 1}$ points are feasible, affinely independent from all other points, 
		and satisfy inequality \eqref{eq:newvalid} at equality.
		
		Next, for $T_{i - 1}$, for all $i \in \bar S$ we construct the following set of feasible points: 
		\exclude{	
			Next, let $\sigma_{i(p^*_i)}$ be the index of $t_{i(1)}$ in the set $T_i^*$, i.e., $t_{i(1)}$-th scenario has the $p^*_i$-th largest cumulative demand in period $i$. Then,  based on each $T_i$, we construct the following set of feasible points (\textbf{PTT$_i$}), for all $i \in \bar S$. :
		}		
		\begin{align*}
			\nonumber		& (\mathbf e_1 + \mathbf e_i + \ve_{\ell + 1}, \mathbf{\bar y}^{i - 1}_{j} + \mathbf e_i (D_{\sigma_{\ell(1)}} - D_{t_{i - 1(j)}}) + \ve_{\ell + 1} M_{\ell + 1}, \sum_{p = 1}^{g(t_{i - 1(j)})-1} \mathbf e_{\sigma_{i - 1(p)}}),   \quad  j \in [a_{i - 1} + 1],  
			\end{align*}
		 and
					\begin{align*}	
			\nonumber		&(\mathbf e_1 + \mathbf e_i +  \ve_{\ell + 1}, \mathbf{\bar y}^{i - 1}_{a_{i - 1} + 1} + \mathbf e_i (D_{\sigma_{\ell(1)}} - D_{t_{i - 1(a_{i - 1} + 1)}} + \triangle) + \ve_{\ell + 1} M_{\ell + 1} , \sum_{p = 1}^{k} \mathbf {e}_{\sigma_{i - 1(p)}} ),
		\end{align*}
		where  $0<\triangle \leq M_i-D_{\sigma_{\ell(1)}} + D_{t_{i - 1(a_{i - 1} + 1)}} $.	These $\sum_{i \in \bar S} a_{i - 1} + 2 |\bar S|$ points 
		are affinely independent from all other points, and satisfy inequality \eqref{eq:newvalid} at equality. 
		
		Next, for each $i \in S  \setminus \{1\}$, we construct the following set of feasible points:	
		\begin{align*}
			\nonumber	&(\mathbf e_1 + \mathbf e_i + \ve_{\ell + 1}, \mathbf{\bar y}_{1}^\ell  + \ve_{\ell + 1} M_{\ell + 1}, \mathbf 0), \\
			&(\mathbf e_1 + \mathbf e_i + \ve_{\ell + 1}, \ve_1 D_{\sigma_{i - 1}(1)} + \mathbf e_i (D_{\sigma_{\ell(1)}} - D_{\sigma_{i - 1}(1)}) + \ve_{\ell + 1} M_{\ell + 1}, \mathbf 0).
		\end{align*}	
		These $2|S| - 2$ points are feasible, affinely independent from all other points, and satisfy inequality \eqref{eq:newvalid} at equality. 
		\exclude{	
			Finally, we obtain the last $2|S| + 2(n - |L|)- 2$ feasible affinely independent points as follows:
			\begin{align*}
				\nonumber	&(\mathbf e_1 + \mathbf e_i, \mathbf{\bar y}_{1}^\ell + \mathbf e_i (D_{\sigma_{\ell(1)}} - D_{t_{i(1)}}), \mathbf 0), \quad i \in( N \setminus L) \cup (S  \setminus \{1\})\\
				\label{eq:add}		&(\mathbf e_1 + \mathbf e_i, \mathbf{\bar y}_{1}^\ell + \mathbf e_i (D_{\sigma_{\ell(1)}} - D_{t_{i(1)}} + \epsilon), \mathbf 0), \quad i \in( N \setminus L) \cup (S  \setminus \{1\})
			\end{align*}
			where $\mathbf 0$ is an $n$-dimensional zero vector. These $2|S| + 2(n - |L|)- 2$ points satisfy inequality \eqref{eq:newvalid} at equality, and are affinely independent from all other points listed. Hence, we obtain $2n + m - 1$ linearly independent feasible points that satisfy \eqref{eq:newvalid} with equality, which completes the proof.}
		
		Next, for each $i \in N \setminus[\ell+1]$, consider the following set of points:     
		$	 (\mathbf e_1 + \ve_{\ell+ 1}+ \ve_i, \mathbf{\bar y}_{1}^\ell + \ve_{\ell+ 1} M_{\ell + 1},  \mathbf 0), $ and 
		$ (\mathbf e_1 + \ve_{\ell+ 1}+ \ve_i, \mathbf{\bar y}_{1}^\ell + \ve_{\ell+ 1} M_{\ell + 1}+ \ve_i \triangle,  \mathbf 0),$
		\exclude{
			\begin{align*}
				\nonumber	&(\mathbf e_1 + \mathbf e_{p^*} + \ve_i,    \ve_1 D_{\sigma_{\ell(1)}} + \ve_{p^*} M_{p^*}, \mathbf 0), \quad i \in N \setminus [\ell], \\
				&(\mathbf e_1 + \mathbf e_{p^*} + \ve_i,    \ve_1 D_{\sigma_{\ell(1)}} + \ve_{p^*} M_{p^*} + \ve_i \triangle, \mathbf 0), \quad i \in N \setminus  [\ell] ,
			\end{align*}
		}
		where $0<\triangle \leq M_i$.	These $2(n - \ell-1)$ points are feasible, affinely independent from all other points, and satisfy inequality \eqref{eq:newvalid} at equality.  
		
		Finally, for  a fixed index $p^* \in \bar S$,  we construct the remaining two points:
		$		(\mathbf e_1 + \ve_{p^*}, \mathbf{\bar y}_{1}^{p^* - 1} + \ve_{p^*} M_{p^*},  \mathbf 0), $ and 
		$  (\mathbf e_1 + \ve_{p^*} + \ve_{\ell + 1}, \mathbf{\bar y}_{1}^{p^* - 1} + \ve_{p^*} M_{p^*} + \ve_{\ell + 1}\triangle,  \mathbf 0),$ 
		where $ 0< \triangle < M_{\ell + 1}$. These two points are feasible, affinely independent from all other points, and satisfy inequality \eqref{eq:newvalid} at equality. 
		Hence, we obtain $2n + m - 1$ affinely independent feasible points that satisfy  inequality \eqref{eq:newvalid} at equality, which completes the proof.	 
	\end{proof}
	\exclude{
		\begin{corollary}
			Suppose that 	
			\begin{itemize}
				\item  If $t_{\ell(1)} = \sigma_{l(1)}$ , and $1 \in S$;
				\item  Let $S' \subset \bar S$, if $T_i = T_l$, for all $i \in S'$, and $T_p \cap T^*_q = \emptyset$, for all $p \not = q$, and $p, q \in (\bar S \setminus S') \cup\{\ell\}$;
				\item If $D^{t^\ell_{1}}_\ell - D^{t^i_{ k + 1}}_i < M_i$.
			\end{itemize}
			Then the proposed inequalities are facet-defining.
			
		\end{corollary}
	}
	
	\renewcommand{\theexample}{\ref{ex:ex1}}
	\begin{example} (Continued.) 
		 Inequality \eqref{eq:ex3.1} is a facet-defining inequality for $conv(P)$, because $T_{\ell - 1} \cap T^*_{\ell} = \emptyset$, $T_{\ell } \cap T^*_{\ell - 1} = \emptyset$, $1 \in S$, and $D_{t_{2(1)}} - D_{t_{1(1)}} = D_{\sigma_{2(1)}} - D_{\sigma_{1(1)}}  = 5 < M_2 = 10$.
		
	\end{example}

	\begin{remark}
		Note  that if $\bar S  = \emptyset$, then the proposed inequality \eqref{eq:newvalid} reduces to the mixing inequality \eqref{eq:mixing} for a given $\ell \in N$ and $T_\ell\subseteq T^*_\ell$.
		In addition,  suppose  that  $D_{{\sigma_{\ell + 1 (k + 1)}}} \geq D_{{\sigma_{\ell (1)}}}$.   Consider   inequality \eqref{eq:newvalid} for the $(\ell + 1)$-th time period, when $\bar S =  \{\ell + 1\} $ and  $T_{\ell + 1} = \emptyset$, for the same choice of  $T_\ell$ as inequality \eqref{eq:mixing}:
		\begin{align} \label{eq:rem-mixlS}
			&\sum_{i = 1}^\ell y_i  + \sum_{j = 1}^{a_\ell} (D_{t_{\ell (j)}} - D_{t_{\ell (j + 1)}})  z_{t_{\ell (j)}}   
			\geq   D_{{\sigma_{\ell + 1 (k + 1)}}} - (D_{{\sigma_{\ell + 1 (k + 1)}}} -  D_{{\sigma_{\ell (1)}}}) x_{\ell + 1}.
		\end{align}
		Because $D_{{\sigma_{\ell + 1 (k + 1)}}} \geq D_{{\sigma_{\ell (1)}}}$ by assumption, the right-hand side of \eqref{eq:rem-mixlS} equals $D_{{\sigma_{\ell + 1 (k + 1)}}}(1-x_{\ell + 1})+D_{{\sigma_{\ell (1)}}} x_{\ell + 1} \geq  D_{\sigma_{\ell (1)}} = D_{t_{\ell (1)}}$, the right-hand side of \eqref{eq:mixing}. 
		\exclude{
			Since $x_{\ell + 1} \leq 1$ and $D_{{\sigma_{\ell + 1 (k + 1)}}} \geq D_{{\sigma_{\ell (1)}}}$, we have: 
			$$ D_{t_{\ell ( 1)}} +   (D_{{\sigma_{\ell (1)}}} - D_{{\sigma_{\ell + 1 (k + 1)}}} ) (x_{\ell + 1} - 1) \geq  D_{t_{\ell (1)}} ,$$
			which indicates that  inequality \eqref{eq:rem-mixlS} is at least as strong as the mixing inequality \eqref{eq:mixing}.  
		}
		Hence, if $\bar S = \emptyset$ and  $D_{{\sigma_{\ell + 1 (k + 1)}}} \geq D_{{\sigma_{\ell (1)}}}$, then the mixing inequality \eqref{eq:mixing} is  dominated by the proposed inequality \eqref{eq:rem-mixlS}.
	\end{remark}
	Next, we consider another special case that shows the strength of our inequalities.
	\begin{proposition} \label{prop:k=0}
		If $\epsilon = 0$, then adding the proposed inequalities \eqref{eq:newvalid} to $P$ is sufficient to give the complete  linear description of  $conv(P)$.
	\end{proposition}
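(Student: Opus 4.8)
The plan is to show that when $\epsilon = 0$ the set $P$ degenerates into a copy of the classical deterministic uncapacitated lot-sizing polytope, and that the proposed inequalities \eqref{eq:newvalid} specialize exactly to the $(\ell, S)$ inequalities of \cite{ls}, whose sufficiency then closes the argument.

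First I would record the structural consequence of $\epsilon = 0$. Since $k = \lfloor m\epsilon\rfloor = 0$, the cardinality constraint \eqref{eq:dep2} reads $\sum_{j \in \Omega} z_j \le 0$, which together with $z_j \ge 0$ forces $z_j = 0$ for every $j \in \Omega$, in $P$ and in any linear relaxation of it. Consequently $P$ lies in the hyperplane $\{\mathbf z = \mathbf 0\}$, and on the $(\mathbf x, \mathbf y)$ coordinates the demand constraints \eqref{eq:dep1} reduce to $\sum_{i=1}^t y_i \ge \max_{j \in \Omega} D_{jt} = D_{\sigma_{t(1)}}$ for each $t \in N$. Setting $\tilde D_t := D_{\sigma_{t(1)}}$ and $\tilde d_t := \tilde D_t - \tilde D_{t-1}$ (with $\tilde D_0 := 0$; these are nonnegative since each $D_{jt}$, and hence $\tilde D_t$, is nondecreasing in $t$), the projection of $P$ onto $(\mathbf x, \mathbf y)$ is precisely the feasible region of the deterministic uncapacitated lot-sizing problem with demands $\tilde d_t$, retaining the setup-forcing constraints \eqref{eq:dep4} and the bounds $\mathbf 0 \le \mathbf x \le \mathbf 1$, $\mathbf y \ge \mathbf 0$.

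Next I would verify the exact correspondence of the two inequality families at $k = 0$. When $k = 0$ each set $T_i^*$ is empty, so every chosen $T_{i-1}$ and $T_\ell$ is empty, all cardinalities $a_i$ equal $0$, and $\bar T = \emptyset$, so the $z$-terms of \eqref{eq:newvalid} disappear. Using the conventions $t_{\ell(1)} = \sigma_{\ell(1)}$ and $t_{i-1(1)} = \sigma_{i-1(k+1)} = \sigma_{i-1(1)}$, together with $\tilde D_0 = 0$ for the case $1 \in \bar S$, inequality \eqref{eq:newvalid} becomes
\begin{align*}
	\sum_{i \in S} y_i + \sum_{i \in \bar S} \big(\tilde D_\ell - \tilde D_{i-1}\big) x_i \ge \tilde D_\ell .
\end{align*}
Because $\tilde D_\ell - \tilde D_{i-1} = \sum_{p=i}^\ell \tilde d_p$ and $\tilde D_\ell = \sum_{p=1}^\ell \tilde d_p$, this is exactly the $(\ell, S)$ inequality for the aggregated demands $\tilde d$; letting $\ell$ range over $N$ and $S$ over all subsets of $[\ell]$ (with $\bar S = \emptyset$ recovering the pure cumulative-production inequality, as in the Remark) reproduces every $(\ell, S)$ inequality, and each is valid for $P$ by the already-established validity of \eqref{eq:newvalid}.

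Finally I would invoke \cite{ls}: the $(\ell, S)$ inequalities, together with the setup-forcing constraints \eqref{eq:dep4} and the trivial bounds, give the complete linear description of the convex hull of the deterministic lot-sizing feasible region with demands $\tilde d$. Intersecting this description with the fixed hyperplane $\mathbf z = \mathbf 0$ (enforced by $z_j \ge 0$ and \eqref{eq:dep2}) yields $conv(P)$, which is the claim. The step that most needs care is the second one: one must check that the degenerate $k = 0$ conventions --- in particular the value of $D_{t_{\ell(1)}}$, the reading of the coefficient $D_{t_{\ell(1)}} - D_{t_{i-1(1)}}$ when $1 \in \bar S$, and the vanishing of the empty mixing sums --- collapse \eqref{eq:newvalid} onto the standard $(\ell, S)$ form with the worst-case demands $\tilde d_t$, and that no $(\ell, S)$ inequality is omitted. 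Once this identification is secured, the classical result of \cite{ls} immediately delivers the convex-hull description.
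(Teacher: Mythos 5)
Your proof is correct and takes essentially the same route as the paper's: fix $\vz=\mathbf 0$ since $k=0$, observe that $P$ collapses to a deterministic uncapacitated lot-sizing feasible region with the worst-case cumulative demands $D_{\sigma_{t(1)}}$, check that \eqref{eq:newvalid} specializes exactly to the $(\ell,S)$ inequalities for those demands, and invoke the convex-hull result of \cite{ls}. Your write-up is merely more explicit than the paper's about the degenerate $k=0$ conventions (empty $T_i^*$, the value of $t_{\ell(1)}$) and the intersection with the hyperplane $\vz=\mathbf 0$.
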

	
	\begin{proof}
		If $\epsilon = 0$, then $k = 0$, and we have to satisfy every scenario, i.e.,  the cumulative production until time period $i\in N$ must be sufficient  to satisfy the scenario with largest cumulative demand until time period $i$. In this case,  $\bar T = \emptyset$ , and the proposed inequalities \eqref{eq:newvalid} reduce to the following inequalities:
		\begin{align}
			\label{eq:reduce}
			\sum_{i \in S} y_i  + \sum_{i \in \bar S}(D_{\sigma_{\ell(1)}} - D_{\sigma_{i-1(1)}}) x_i \geq  D_{\sigma_{\ell(1)}}.
			\end{align}	
		Furthermore, when $k = 0$,  we can fix $\vz = \mathbf 0$ and $s_{jt} = \sum_{i = 1}^t(y_i - d_{ji})$ for all $t\in N, j\in \Omega$, and rewrite the deterministic equivalent program:
		\begin{subequations}
			\label{eq:ssls}	\begin{align}
				\min \; &\vf^\top \vx + \vc^\top \vy + \sum_{j = 1}^m\sum_{t=1}^n \pi_j  h_t (\sum_{i = 1}^t(y_i - d_{ji}) )\\
				\text{s.t.}\; & \sum_{i = 1}^t y_i \geq  D_{\sigma_{t(1)}} &  t \in N\\
				& y_i \leq M_i x_i, & i \in N \\
				& \vy  \in \mathbb R_+^n, \vx \in \mathbb B^n. 
			\end{align}
		\end{subequations}
		Note that the optimization problem \eqref{eq:ssls} is equivalent to a deterministic uncapacitated lot-sizing problem, where the cumulative demand in each time period is given by the largest cumulative demand in each time period over all scenarios. 	Hence,  the $(\ell, S)$ inequalities for the deterministic equivalent program \eqref{eq:ssls} when $k = 0$  are sufficient to describe  $conv(P)$ when $\epsilon = 0$ \cite[follows from][]{ls}, and they are in the form of inequality \eqref{eq:reduce}, which is a special case of the proposed inequality \eqref{eq:newvalid} when $k = 0$. 
	\end{proof}
In contrast, for  the special case of $\epsilon = 0$, when we let $\vz = \mathbf 0$, inequalities
	\eqref{eq:weakcut2} reduce to $(\ell, S)$ inequalities for each scenario  $j\in \Omega$ individually, which is not sufficient to describe $conv(P)$ in this case. Clearly, inequalities  \eqref{eq:newvalid} combine information across all scenarios and yield stronger inequalities.  
	
	\vspace{0.5cm}
	
	\noindent \textbf{Separation of inequalities \eqref{eq:newvalid}}: 
	There are exponentially many  inequalities \eqref{eq:newvalid}. We have two main questions when dealing with the separation problem for a given  $\ell\in N$: first, for any time period $i \not = 1$, we need to decide if $i \in S$ or $i \in \bar S$; second, for each $i \in \bar S$,  we need to find a subset $T_{i-1}$ of $T_{i-1}^*$ so that the term $\sum_{j \in \bar T}\bar \alpha_j z_j$ is minimized. 
	First, given a fractional solution $(\hat \vx, \hat \vy, \hat \vz)$, for all $i \in N\setminus\{1\}$, we solve the following  problems
	\begin{align} 	\label{eq:sep1}
		Y_{i - 1} = \min_{T_{i - 1} \subseteq T_{i - 1}^*}  \;\{ & - D_{t_{i - 1(1)}}\hat x_i +   \sum_{p = 1}^{a_{i - 1}} (D_{t_{i - 1(p)}} - D_{t_{i - 1(p + 1)}})\hat z_{t_{i - 1(p)}} \},
	\end{align}
	\begin{align} 	\label{eq:sep2}
		\hat Y_{i} = \min_{T_{i} \subseteq T_{i}^*, \sigma_{i(1)}\in T_i  }  \;\{  \sum_{p = 1}^{a_{i}} (D_{t_{i(p)}} - D_{t_{i(p + 1)}})\hat z_{t_{i(p)}} \}.
	\end{align}
	Problems \eqref{eq:sep1} and \eqref{eq:sep2} can be solved similarly to the  separation of the mixing inequalities in $O(k\log k)$ time \cite[]{Gunluk} for each $i \in N\setminus\{1\}$.  We let $\bar T_{i- 1}$  and $\hat T_i$ be the optimal argument of problems \eqref{eq:sep1} and \eqref{eq:sep2}, respectively.   
	Finally, for each $\ell \in N\setminus\{1\}$ and $i\in [\ell]$ if $\hat y_i \leq D_{t_{\ell(1)}} \hat x_i+Y_{i-1}$, then we let $i \in S$. Otherwise,  we let  $i \in \bar S$ and $T_{i - 1} = \bar T_{i - 1}$. Then we obtain $\alpha_{j(i - 1)}$ for each $i\in \bar S\cup\{\ell+1\}$ and  $j\in \bar T_{i - 1}$. In addition,  we let $T_\ell=\hat T_\ell$ and  $\bar \alpha_{j} =  \max \Big\{	\max_{i \in \bar S} \{\alpha_{j(i-1)} \},  \alpha_{j\ell}  \Big\}$, for all $j \in \bar T=(\cup_{i \in \bar S} T_{i - 1} )\cup   T_\ell$.  If $\sum_{i\in S} \hat y_i +\sum_{i\in \bar S}( D_{t_{\ell(1)}}- D_{t_{i - 1(1)}}) \hat x_i+\sum_{j\in \bar T}\bar \alpha_j \hat z_j<D_{t_{\ell(1)}}$ for this choice of $\ell, S,\bar T$, then we have found a violated inequality \eqref{eq:newvalid}.

	\exclude{
		\begin{align} 	\label{eq:sep1}
			Y_{i } = \min_{T_{i } \subseteq T_{i }^*}  \;\{ &   \sum_{p = 1}^{a_{i }} (D_{t_{i (p)}} - D_{t_{i (p + 1)}})\hat z_{t_{i (p)}} \},
		\end{align}
		which is  nothing but separation of the mixing inequalities for period $i$, and an $O(k\log k)$-time algorithm has been proposed by \cite{Gunluk}. We let $\bar T_{i}$  be the optimal argument of problem \eqref{eq:sep1}.   
		Now for each $\ell\in N\setminus\{1\}$, we let $1\in S$ (from facet condition 1), and for each $i\in [\ell]\setminus \{1\}$ we check if 
		$\hat y_i  \le (D_{t_{\ell(1)}} - D_{t_{i - 1(1)}})\hat x_i + Y_{i-1}$, then we let $i \in S$. Otherwise, $i \in \bar S$, and  we let $T_{i - 1} = \bar T_{i - 1}$. Then we obtain $\alpha_{ji - 1}$ for each $i\in \bar S\cup\{\ell+1\}$ and  $j\in \bar T_{i - 1}$. In addition,  we let $\bar \alpha_{j} =  \max \Big\{	\max_{i \in \bar S} \{\alpha_{ji-1} \},  \alpha_{j\ell}  \Big\}$, for all $j \in \bar T=(\cup_{i \in \bar S} T_{i - 1} )\cup  T_\ell$.

		\begin{align} 	\label{eq:sep1}
			Y_{i - 1} = \min_{T_{i - 1} \subseteq T_{i - 1}^*}  \;\{ & (D_{t_{\ell(1)}} - D_{t_{i - 1(1)}})\hat x_i +   \sum_{p = 1}^{a_{i - 1}} (D_{t_{i - 1(p)}} - D_{t_{i - 1(p + 1)}})\hat z_{t_{i - 1(p)}} \},
		\end{align}
		and 
		\begin{align} 	\label{eq:sep2}
			Y_{\ell} = \min_{T_{\ell} \subseteq T_{\ell}^*, \sigma_{\ell(1)}\in T_\ell  }  \;\{  \sum_{p = 1}^{a_{\ell}} (D_{t_{\ell(p)}} - D_{t_{\ell(p + 1)}})\hat z_{t_{\ell(p)}} \}.
		\end{align}
		Problems \eqref{eq:sep1} and \eqref{eq:sep2} are nothing but separation of the mixing inequalities, and a polynomial algorithm has been proposed by \cite{Gunluk}.  We let $\bar T_{i- 1}$  and $\bar T_\ell$ be the optimal argument of problems \eqref{eq:sep1} and \eqref{eq:sep2}, respectively.   
		Finally, if $\hat y_i \leq Y_{i-1}$, then we let $i \in S$. Otherwise, $i \in \bar S$, and  we let $T_{i - 1} = \bar T_{i - 1}$. Then we obtain $\alpha_{ji - 1}$ for each $i\in \bar S\cup\{\ell+1\}$ and  $j\in \bar T_{i - 1}$. In addition,  we let $\bar \alpha_{j} =  \max \Big\{	\max_{i \in \bar S} \{\alpha_{ji-1} \},  \alpha_{j\ell}  \Big\}$, for all $j \in \bar T=(\cup_{i \in \bar S} T_{i - 1} )\cup  T_\ell$.  
		
	}
	\begin{proposition}\label{prop:separ}
		The proposed separation procedure runs in $O(n\max\{n,k\log(k)\})$ time.  	Suppose that $T^*_{p - 1} \cap T^*_{q - 1} \cap T_\ell = \emptyset$, for all $p \not = q$, and $p, q \in \bar S $, then the proposed separation procedure is exact.
	\end{proposition}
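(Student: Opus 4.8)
The plan is to prove the two assertions separately: the running-time bound and the exactness guarantee. For the running-time bound I would split the procedure into a preprocessing phase and a main loop. In preprocessing, for each $i\in N\setminus\{1\}$ we solve the two subproblems \eqref{eq:sep1} and \eqref{eq:sep2}; as noted after their definition, each is a mixing-separation problem (carrying, respectively, the extra $-D_{t_{i-1(1)}}\hat x_i$ penalty on the largest selected index, and the forced inclusion $\sigma_{i(1)}\in T_i$), and hence is solvable in $O(k\log k)$ time by the algorithm of \cite{Gunluk}. Since there are $O(n)$ values of $i$, this phase costs $O(nk\log k)$ and simultaneously produces the optimal arguments $\bar T_{i-1},\hat T_i$ and the scalars $Y_{i-1},\hat Y_i$. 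In the main loop I would iterate over the $O(n)$ candidate values of $\ell$; for a fixed $\ell$ the right-hand side $D_{\sigma_{\ell(1)}}$ is determined, and the membership test $\hat y_i\le D_{t_{\ell(1)}}\hat x_i+Y_{i-1}$ is evaluated in $O(1)$ for each $i\in[\ell]$ (with $1\in S$) using the precomputed $Y_{i-1}$. The candidate inequality is then assembled and its violation evaluated from the precomputed quantities in $O(n)$ time per $\ell$, so the main loop costs $O(n^2)$ and the total is $O(nk\log k)+O(n^2)=O(n\max\{n,k\log k\})$.

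For exactness, fix $\ell$ with $t_{\ell(1)}=\sigma_{\ell(1)}$, so that the right-hand side $D_{\sigma_{\ell(1)}}$ of \eqref{eq:newvalid} is determined. Finding the most violated inequality \eqref{eq:newvalid} for this $\ell$ amounts to minimizing its left-hand side over all admissible choices of $S$, $\bar S=[\ell]\setminus S$, the subsets $T_{i-1}\subseteq T_{i-1}^*$ for $i\in\bar S$, and $T_\ell\subseteq T_\ell^*$ with $\sigma_{\ell(1)}\in T_\ell$. The key observation is that the only coupling across distinct indices in this objective is through the coefficients $\bar\alpha_j=\max\{\max_{i\in\bar S}\alpha_{j(i-1)},\alpha_{j\ell}\}$. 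I would use the stated disjointness hypothesis to argue that no scenario $j$ lies in more than one of the selected sets $\{T_{i-1}\}_{i\in\bar S}$ and $T_\ell$; consequently each $\bar\alpha_j$ collapses to the single relevant mixing coefficient, and the left-hand side separates additively as $\sum_{i\in S}\hat y_i+\sum_{i\in\bar S}\big(D_{\sigma_{\ell(1)}}\hat x_i+Y_{i-1}\big)+\hat Y_\ell$.

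Once the objective separates, the optimization decouples completely. Minimizing the $T_\ell$-contribution subject to $\sigma_{\ell(1)}\in T_\ell$ is exactly problem \eqref{eq:sep2}, attained by $\hat T_\ell$; minimizing the contribution of each $i\in\bar S$ over $T_{i-1}$ is exactly problem \eqref{eq:sep1}, attained by $\bar T_{i-1}$ with value $Y_{i-1}$; and, period by period, the smaller of the two contributions $\hat y_i$ (for $i\in S$) and $D_{\sigma_{\ell(1)}}\hat x_i+Y_{i-1}$ (for $i\in\bar S$) is selected by precisely the membership rule used in the procedure. Hence the procedure attains the global minimum of the left-hand side for each fixed $\ell$, so it reports a violated inequality \eqref{eq:newvalid} whenever one exists for that $\ell$; ranging over all $\ell\in N\setminus\{1\}$ yields exactness.

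The main obstacle is the max-versus-sum discrepancy in the coefficients $\bar\alpha_j$: the procedure optimizes each period's membership and subset independently, effectively charging a shared scenario additively through the separate $Y_{i-1}$ values, whereas the true left-hand side charges each shared scenario only its maximum coefficient. The crux of the argument is therefore to show that the disjointness condition removes every scenario shared among the selected sets, so that the additive surrogate used by the procedure coincides with the true objective and the independent decisions become globally optimal; without this hypothesis the surrogate only over-estimates the left-hand side, and the greedy partition may fail to be the most violated one. A secondary point to verify is that the modified subproblems \eqref{eq:sep1} and \eqref{eq:sep2} genuinely remain within the $O(k\log k)$ mixing-separation framework despite the extra penalty term and the forced-element constraint.
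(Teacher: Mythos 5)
Your proof is correct and follows essentially the same route as the paper's: the running-time bound comes from solving the $O(n)$ mixing-type subproblems \eqref{eq:sep1}--\eqref{eq:sep2} in $O(k\log k)$ each plus an $O(n)$ membership pass per $\ell$, and exactness follows because the disjointness hypothesis forces each $\bar\alpha_j$ to collapse to a single mixing coefficient, so the left-hand side of \eqref{eq:newvalid} separates additively over time periods and the independent per-period decisions are globally optimal. Your explicit discussion of the max-versus-sum issue is a welcome elaboration of the paper's one-line justification, but it is the same argument.
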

	\begin{proof}
		For a fixed index $\ell\in N$, if the condition stated in the proposition holds, then we can rewrite inequality \eqref{eq:newvalid} as:
		\begin{align*}
			\sum_{i \in S} y_i  + \sum_{i \in \bar S} \Big ((D_{t_{\ell(1)}} - D_{t_{i - 1(1)}}) x_i + &\sum_{p = 1}^{a_{i - 1}} (D_{t_{i - 1(p)}} - D_{t_{i - 1(p + 1)}})z_{t_{i - 1(p)}}\Big) & +  \sum_{p = 1}^{a_\ell} (D_{t_{\ell(p)}} - D_{t_{\ell(p + 1)}})z_{t_{\ell(p)}} \geq  D_{t_{\ell(1)}},
		\end{align*}
		because $\alpha_{j(i - 1)}=0$, for all but at most one $i \in \bar S\cap\{\ell+1\}$ and $j \in T_{i-1}$. As a result, each time period is separable from other time periods, and the separation procedure is exact. 	
		\exclude{
			If $\hat y_i \leq \min_{T_{i - 1} \subseteq T_{i - 1}^* } \{ (D_{t_{\ell(1)}} - D_{t_{i - 1(1)}}) \hat x_i + \sum_{j = 1}^{a_{i - 1}} (D_{t_{i - 1(p)}} - D_{t_{i - 1(p + 1)}})\hat z_{t_{i - 1(p)}} \} $, then by letting $i \in \bar S$, we obtain the minimum left-hand side of inequality \eqref{eq:newvalid}.  Otherwise, according to the definition of $Y_\ell$ and  $Y_{i - 1}$, for all $i \in \bar S$,  the separation algorithm of the mixing cuts proposed in \cite{Gunluk}, the left-hand side of  inequality \eqref{eq:newvalid} is  minimized by the choice of $\bar T_i$ and $\bar T_\ell$.
		}
		
		The complexity of the algorithm for solving \eqref{eq:sep1} and \eqref{eq:sep2} for all $i \in N\setminus\{1\}$ is $O\big(nk\log(k)\big)$. After finding the optimal $\bar T_{i-1}$ for $i \in N\setminus\{1\}$, which is independent of the choice of $\ell$, identifying the set $S$ for a given $\ell\in N$ takes $O(n)$ time.  Therefore,  we get an overall run time of $O(n\max\{n,k\log(k)\})$.
	\end{proof}
	
	If the conditions in Proposition \ref{prop:separ} are not satisfied, then the separation procedure is a heuristic. 
	
 In Appendix \ref{sec:stock} we give a second class of valid inequalities that involves the inventory variables, which is valid for the deterministic equivalent formulation, and facet-defining under certain conditions.

	\section{A new formulation that exploits the simple recourse property} \label{sec:newformulation}
	
	The deterministic equivalent formulation contains $O(mn)$ additional variables, which becomes computationally challenging if the number of scenarios, $m$,  or the number of time periods, $n$ increases. One can  consider a Benders decomposition algorithm given in Appendix \ref{sec:Benders}. However, we may have to add exponentially many  optimality cuts, which significantly slow down the solution of the master problem, as we show in our computational study in Section \ref{sec:comp}.
	
	In this section, we propose a new formulation for SPLS that is similar to the master problem used in the Benders decomposition algorithm. However  we show that the new formulation only uses polynomially many  inequalities to capture the second-stage cost. 
	
	For all $i \in N$ and $j \in \Omega$, let $\bar \sigma$ be a permutation of the scenarios such that	
	$		D_{\bar \sigma_{i(1)}i} \leq D_{\bar \sigma_{i(2)}i}  \leq \cdots \leq D_{\bar \sigma_{i(m)}i}, $ 
	where $D_{\bar \sigma_{i(j)}i}$ is the $j$-th \textit{smallest} cumulative demand for the $i$-th time period. To further simplify the notation, let $ D_{\bar \sigma_{i(j)}} =D_{\bar \sigma_{i(j)}i} $.

	\begin{proposition}\label{pro:8}
		Let $\Theta'_i$ be an additional variable that captures the total inventory of $i$-th time period for \textit{all} scenarios. In addition, let $[k]^+ = \{0, 1, 2, \dots, k\}$. The  formulation
		\begin{subequations}
			\label{eq:newformulation}	
			\begin{align}
				\min \; &\vf^\top \vx + \vc^\top \vy + \frac{1}{m}\sum_{i = 1}^n h_i \Theta'_i   \\
		\label{eq:15a}		\text{s.t.}\; & \eqref{eq:dep1} - \eqref{eq:dep4},  \\
				\label{eq:strong}		& \Theta'_i \geq (m - q)\sum_{p = 1}^i y_p - \sum_{p = 1}^{m - q} D_{\bar \sigma_{i(p)}}, &  i \in N,   q \in [k]^+ \\
		\label{eq:15c}		& x \in \mathbb B^n, y \in \mathbb R^n_+, z \in \mathbb B^m, \Theta' \in \mathbb R^n_+,
			\end{align}	
		\end{subequations}
		is equivalent to the deterministic equivalent of SPLS \eqref{eq:dep} under equiprobable scenarios.	
	\end{proposition}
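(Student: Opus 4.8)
The plan is to show that \eqref{eq:newformulation} and the deterministic equivalent \eqref{eq:dep} share the same set of feasible $(\vx,\vy,\vz)$ and the same optimal value. Both models impose \eqref{eq:dep1}--\eqref{eq:dep4}, so their projections onto $(\vx,\vy,\vz)$ coincide, and it suffices to fix any such feasible $(\vx,\vy,\vz)$ and compare the minimum second-stage cost. Since $h_i\ge 0$, the inventory variables in \eqref{eq:dep} are driven to their lower envelope $s_{ji}=\max\{0,\,\sum_{p=1}^i y_p-D_{ji}\}$ (from \eqref{eq:dep5} together with $\vs_j\in\mathbb R^n_+$), and the same coefficients $h_i\ge 0$ drive each $\Theta'_i$ in \eqref{eq:newformulation} to the smallest value permitted by \eqref{eq:strong}. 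Writing $Y_i=\sum_{p=1}^i y_p$ and $\phi_i(Y_i)=\sum_{j=1}^m\max\{0,\,Y_i-D_{ji}\}$, the whole claim reduces, period by period, to showing that the minimum of $\Theta'_i$ subject to \eqref{eq:strong} equals $\phi_i(Y_i)$.

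For the easy inequality (minimum $\Theta'_i \le \phi_i(Y_i)$), I would verify that $\Theta'_i=\phi_i(Y_i)$ satisfies \eqref{eq:strong} for every $q\in[k]^+$. Setting $r=m-q$ and discarding the $q$ smallest (nonnegative) summands, then using $\max\{0,a\}\ge a$,
\[
\phi_i(Y_i)\ge\sum_{p=1}^{r}\max\{0,\,Y_i-D_{\bar\sigma_{i(p)}}\}\ge\sum_{p=1}^{r}\big(Y_i-D_{\bar\sigma_{i(p)}}\big)=(m-q)Y_i-\sum_{p=1}^{m-q}D_{\bar\sigma_{i(p)}},
\]
which is precisely \eqref{eq:strong}. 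This direction uses neither the chance constraint nor feasibility of $\vy$.

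The reverse inequality (minimum $\Theta'_i\ge\phi_i(Y_i)$) is the crux, and it is where the chance constraint enters. Let $r^\ast=|\{j\in\Omega:D_{ji}\le Y_i\}|$; since the $r^\ast$ scenarios with smallest cumulative demand at period $i$ are exactly those with $D_{ji}\le Y_i$, a direct evaluation gives $\phi_i(Y_i)=r^\ast Y_i-\sum_{p=1}^{r^\ast}D_{\bar\sigma_{i(p)}}$. Now any $(\vx,\vy,\vz)$ obeying \eqref{eq:dep1}--\eqref{eq:dep2} has at most $k$ scenarios with $z_j=1$, while $z_j=0$ forces $Y_i\ge D_{ji}$; hence at least $m-k$ scenarios satisfy $Y_i\ge D_{ji}$, i.e.\ $r^\ast\ge m-k$. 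Consequently $q^\ast:=m-r^\ast\in[k]^+$, so the single constraint \eqref{eq:strong} with $q=q^\ast$ already forces $\Theta'_i\ge r^\ast Y_i-\sum_{p=1}^{r^\ast}D_{\bar\sigma_{i(p)}}=\phi_i(Y_i)$. Together with the previous paragraph this yields minimum $\Theta'_i=\phi_i(Y_i)$ for every $i\in N$; multiplying by $h_i/m$ and summing over $i$ shows the two optimal values agree, which establishes the equivalence.

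The step I expect to require the most care is this truncation argument: the constraints \eqref{eq:strong} range only over $q\in\{0,\dots,k\}$ (equivalently $r=m-q\in\{m-k,\dots,m\}$), and the whole point is that the chance constraint guarantees the relevant index $r^\ast$ never drops below $m-k$, so no inventory cost is lost by omitting the constraints with $r<m-k$. It is also worth recording explicitly the role of the sign condition $h_i\ge 0$ (so that minimization selects the lower envelope in both models) and checking the boundary/tie cases $Y_i=D_{ji}$, which can change the labeling of $r^\ast$ but leave the value $\phi_i(Y_i)$ and the argument unchanged.
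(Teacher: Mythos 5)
Your proof is correct, and it is organized differently from the paper's in a way that is worth noting. The paper's proof first introduces the exponential family of cuts $\Theta'_i \geq (m-q)\sum_{p=1}^i y_p - \sum_{j\in R_q} D_{ji}$ over all subsets $R_q\subseteq\Omega$ with $|R_q|=m-q$, \emph{asserts} that this family captures the correct value of $\Theta'_i(\vy)$ for any feasible $\vy$, and then proves in detail only the domination step: that the sorted choice $R_q=\{\bar\sigma_{i(1)},\dots,\bar\sigma_{i(m-q)}\}$ yields the largest right-hand side, so the polynomial subclass \eqref{eq:strong} implies the whole family. You instead bypass the exponential family entirely and prove exactness of \eqref{eq:strong} directly, period by period: your lower-bound direction is essentially the paper's validity/domination observation, but your upper-bound direction --- identifying $r^\ast=|\{j:D_{ji}\le Y_i\}|$, computing $\phi_i(Y_i)=r^\ast Y_i-\sum_{p=1}^{r^\ast}D_{\bar\sigma_{i(p)}}$, and using \eqref{eq:dep1}--\eqref{eq:dep2} to guarantee $r^\ast\ge m-k$ so that the tight constraint index $q^\ast=m-r^\ast$ actually lies in $[k]^+$ --- is precisely the step the paper leaves implicit inside its assertion that the cuts ``capture the correct value for any feasible $\vy$.'' Your version makes explicit where the chance constraint is used and why truncating at $q=k$ loses nothing, which is the real content of the proposition; the paper's version buys a shorter write-up by deferring that point. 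Your attention to the reduction to fixed $(\vx,\vy,\vz)$, the role of $h_i\ge 0$ in both models, and the tie cases $Y_i=D_{ji}$ is all sound.
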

	
	\begin{proof}		
	We can rewrite the deterministic equivalent formulation \eqref{eq:dep} as a  two-stage problem  given by	
		\begin{align*}
			\min \; &\vf^\top \vx + \vc^\top \vy + \frac{1}{m}\sum_{i = 1}^n h_i \Theta'_i(\vy)   \\
					\text{s.t.}\; & \eqref{eq:dep1} - \eqref{eq:dep4},  \\
		& x \in \mathbb B^n, y \in \mathbb R^n_+, z \in \mathbb B^m, \Theta' \in \mathbb R^n_+,
		\end{align*}	
	where $\Theta'_i(\vy)$,  the total inventory level at each period $i$,   is defined by the second-stage simple resource  problem with respect to each time period $i$,  stated as 
	\begin{align*}
		\Theta'_i(\vy)  = \; \min & \;  \sum_{j = 1}^m s_{ji} \\
	\text{s.t.}\;  	& s_{ji} \geq \sum_{p = 1}^i y_p - D_{ji}, &  j \in \Omega   \\
	& s_{ji} \geq 0 &  j \in \Omega.
		\end{align*}
	Let $\Theta_i'$ be a variable that captures the correct value of $\Theta'_i(\vy)$ for any feasible $\vy$ 	through the exponentially many inequalities	
		\begin{align}
			\label{eq:weak}   \Theta'_i \geq (m - q)\sum_{p = 1}^i y_p  - \sum_{ j \in  R_q} D_{ji}, \quad  q \in [k]^+,
		\end{align}
		where $R_q \subseteq \Omega $ is a subset of scenarios such that $|R_q| = m - q$. Hence,  to show that the proposed formulation \eqref{eq:newformulation} is equivalent to the deterministic equivalent program \eqref{eq:dep},   we show that the polynomial subclass \eqref{eq:strong} of the exponential class of inequalities \eqref{eq:weak} suffice to give a correct formulation.
			For a fixed $q \in [k]^+$ and $i \in N$,  consider the following chain of inequalities:
		\begin{align*}
		 \Theta_i' \geq (m - q)\sum_{p = 1}^i y_i  - \sum_{p = 1}^{m - q} D_{\bar \sigma_{i(p)}}  \geq  (m - q)\sum_{p = 1}^i y_p  - \sum_{ j \in  R_q} D_{ji},
		\end{align*}	
	where the first inequality follows from the fact that the set $\{\bar \sigma_{i(1)}, \bar \sigma_{i(2)}, \dots, \bar \sigma_{i(m -q)} \}$ is a possible choice of $ R_q$, and the second inequality follows from the definition of the  permutation $\bar \sigma$. Hence, the polynomial class of inequalities \eqref{eq:strong} implies all inequalities of the form \eqref{eq:weak}, which completes the proof.
	\end{proof}

	\begin{example} (Continued.) Let $i = 2$, then the value of  $\Theta'_2$ can be captured by the following $k + 1 = 3$ inequalities
		\begin{subequations}
			\begin{align}
				\label{eq:ex1}	&\Theta_2' \geq 5(y_1 + y_2) - 7 - 9 - 9 - 10 - 11, \\
				\label{eq:ex2}	&\Theta_2' \geq 4(y_1 + y_2) - 7 - 9 - 9 - 10, \\
				\label{eq:ex3}	&\Theta_2' \geq 3(y_1 + y_2) - 7 - 9 - 9.   
			\end{align}
		\end{subequations}
		In the optimal solution, if every scenario is satisfied at time period 2, then inequality \eqref{eq:ex1} captures the value of $\Theta'_2$, and the other two inequalities provide  lower bounds on $\Theta'_2$. Suppose that in the optimal solution, one scenario is violated in time period 2, then the violated scenario must be the scenario with the highest cumulative demand at time period 2. Hence, inequality \eqref{eq:ex2} captures the correct value of $\Theta'_2$, and inequalities \eqref{eq:ex1} and \eqref{eq:ex3} yield  lower bounds for $\Theta'_2$. 
	\end{example}

	\begin{remark}
		We show that the proposed formulation can also be applied to the general two-stage chance-constrained program with simple recourse, equiprobable scenarios and finite probability space.  
		
		Given a scenario set $\Omega = \{1, 2, \dots, m\}$, let $\vx$ be the vector of the first stage decision variables,  $\vc$ be its cost vector, and $X$ be its feasible region. In addition,  the following scenario-dependent constraint set:
		$$ A_j \vx \geq b_j   $$
		is enforced only when scenario $j \in \Omega$ is satisfied by the chance constraint, where $A_j$ and $b_j$ are random coefficient matrix of $\vx$ and right-hand side vector with appropriate dimensions, respectively.  In addition, the $d$-dimensional simple recourse function,  \cite[see, e.g.,][]{birgebook:97}  is defined as:
		$$\sum_{i = 1}^d h_i [   \vu_i^\top \vx -g_{ji}   ]_+, \quad  j \in \Omega  ,  $$
		where $g_{ji}$ is scenario-dependent parameter, for all $j \in \Omega$ and $ i \in [d]$, and $\vu_i$ is the coefficient vector of the recourse function for $i$-th dimension, for all $i \in [d]$. Let $h_i, i\in [d]$ be a penalty term for the excess $[   \vu_i^\top \vx -g_{ji}   ]_+$ in the second stage.

		Assume that each scenario is equally likely. The deterministic equivalent of a general two-stage chance-constrained program with simple recourse, equiprobable scenarios, and finite probability space is stated as follows:
		\begin{subequations}
			\label{eq:ggen}
			\begin{align}
				\label{eq:ggen0} \min\; & \vc^\top \vx  + \frac{1}{m}\sum_{j = 1}^m \sum_{i = 1}^d h_i [  \vu_i^\top \vx -g_{ji}    ]_+ \\
				\label{eq:ggen1} \text{s.t.} \;& A_j x +\bar M_j z_j  \geq b_j \\
				\label{eq:ggen2} & \sum_{j = 1}^m  z_j \leq  k \\
				\label{eq:ggen3}  & \vx \in X, \vz \in \mathbb B^m,
			\end{align}
		\end{subequations}
	\end{remark}
	where \eqref{eq:ggen1}-\eqref{eq:ggen2} enforce the chance constraint, and  $\bar M_j$ is sufficiently large to make \eqref{eq:ggen1} redundant when $z_j=1$.  Since we have to introduce $O(md)$ new variables and constraints to linearize the nonlinear term in the cost function \eqref{eq:ggen0}, the deterministic equivalent program \eqref{eq:ggen} is a large-scale mixed-integer program, which is very hard to solve. 
	
	Let $\bar \Theta_i$, for all $i \in [d]$, be the additional variable that captures the value of the recourse function for dimension $i$. In addition, let $\sigma'$ be the permutation of scenarios  such that:
	$ g_{ \sigma'_{i(1)}i} \leq g_{\sigma'_{i(2)}i}  \leq \cdots \leq g_{\sigma'_{i(m)}i}. $ In order to simplify notation, let $g_{\sigma'_{i(j)}i} = g_{\sigma'_{i(j)}} $, for all $j = 1, 2, \dots, m$. Hence, according to Proposition \ref{pro:8}, we can rewrite the deterministic equivalent formulation \eqref{eq:gen} as:
	\begin{align*}
		\min\; & \vc^\top \vx  + \frac{1}{m}  \sum_{i = 1}^d  h_i\bar \Theta_i\\
		\text{s.t.} \; & \eqref{eq:gen1} - \eqref{eq:gen3} \\
		& \bar \Theta_i \geq \sum_{j = 1}^{m - q}  (m - q) \vu^\top_i \vx-g_{\sigma'_{i(j)}} , & i \in [d],  q \in [k]^+, \\
		& \bar \Theta \in \mathbb R^d_+.
	\end{align*}
	Here we only require $d$ new variables and $O(dk)$ many new constraints. In this case, we can greatly reduce the number of variables and constraints in the deterministic equivalent formulation, because $k \ll m$, for small $\epsilon$.
	
	\section{Computational Experiments}\label{sec:comp}
	
	
	In this section, we summarize our  computational  experience with various classes of valid inequalities and our new formulation. All runs were executed on a Windows Server 2012 R2 Data Center with 2.40GHZ Intel(R) Xeon(R) CPU and 32.0 GB RAM. The algorithms tested in the computational experiment were implemented using C programming language, with Microsoft Visual Studio 2012 and CPLEX 12.6.   A time limit of one hour is set.

	\exclude{
		First, We compare the effectiveness of the proposed formulation against the deterministic equivalent formulation.    Next, we study the computational performance of the proposed inequalities \eqref{eq:newvalid} and \eqref{eq:news}.  The main results are given in Tables \ref{tab:ran} and  3.

		\begin{table}[htb]
			\centering
			\caption{Computational results for large-scale instances with different formulations} 	
			\label{tab:ran}		\begin{tabular}{|c|c|cc|ccc|cc|cc} 
				\hline
				\multicolumn{2}{|c|}{Instances}  &\multicolumn{2}{|c|}{ DEP \& Ineq. \eqref{eq:mixing}}   &\multicolumn{3}{|c|}{Benders}  &\multicolumn{2}{|c|}{New \& Ineq. \eqref{eq:mixing}  }    \\	
				\hline
				$(\epsilon, n)$ &	$ m  \;(10^4)$   & Time  & Gap (\%) & Time  & Gap (\%) & Opt. Cuts  & Time   & Gap (\%) \\ 
				\hline
				\multirow{3}{*}{(0.01, 5)}	&	 1 & 277 & 0 &190 & 0  & 35230 & 75 & 0\\
				&		 2 &* & * & - & 0.05 & 338134 & 1792 & 0.25  \\
				&		3  & * & * &- & 0.83 & 866220  & 3198 & 0.23 \\
				\hline
				\multirow{3}{*}{(0.05, 5)}	&		1   &- & 1.64 & - & 1.86 & 236899 &- & 1.31 \\
				&		2& * & * & - & 10.04 & 363326 & - & 2.30  \\
				&		 3   & * & * & - & 4.13 & 853074& - & 3.76 \\		
				\hline	
				\multirow{3}{*}{(0.01, 10)}	&	 1 & * & * & -  & 0.02 & 109091 & 1982 & 0\\
				&		 2 &* & * & - & 1.84 & 382084 & - & 0.94  \\
				&		3  & * & * &(*) & 7.82(*) & 620815 & - & 1.02 \\
				\hline
				\multirow{3}{*}{(0.05, 10)}	&		1   &* & *  & - & 2.45 & 342127 &- & 2.07 \\
				&		2& * & *  & - & 7.60 & 131692  & - & 4.61  \\
				&		 3   & * & * &* &* &*& - & 4.93 \\	
				\hline
			\end{tabular}
			
		\end{table}
		
	}

	In our experiments, we compare the proposed new formulation \eqref{eq:newformulation} against the deterministic equivalent formulation \eqref{eq:dep} and Benders decomposition algorithm (see Appendix \ref{sec:Benders}), with different choices of valid inequalities. The first class of valid  inequalities \eqref{eq:newvalid} and its special case of mixing inequalities \eqref{eq:mixing} are valid for  the deterministic equivalent formulation,  the Benders master problem and the new formulation \eqref{eq:newformulation}.    However, the  second class of valid inequalities given in Appendix \ref{sec:stock}   include the inventory variables, hence they only apply to the deterministic equivalent formulation.  
 In Tables  \ref{tab:3} and \ref{tab:4}, each  row  reports the average of  three instances.  We let $f_i$  and $c_i$ to be randomly generated  from a discrete uniform distribution over [50, 100], and [5, 10], respectively, for all $i \in N$.  In addition,  we generate the demand in each period randomly, where $d_{ji}$ follows discrete uniform distribution [10, 30], for all $i \in N$ and $j \in \Omega$.  

In Table \ref{tab:3}, the ``DEP \eqref{eq:mixing}, \eqref{eq:newvalid}, \eqref{eq:news};" "B. D. \& Ineq. \eqref{eq:mixing}-\eqref{eq:newvalid};" ``N.F. \& \eqref{eq:mixing};" and ``N.F. \& \eqref{eq:mixing}-\eqref{eq:newvalid}"  columns report the performance of the deterministic equivalent formulation with the additional strengthening from inequalities \eqref{eq:mixing}, \eqref{eq:newvalid}, and \eqref{eq:news}; Benders decomposition algorithm with valid inequalities \eqref{eq:mixing} and \eqref{eq:newvalid}; new formulation with valid mixing inequalities \eqref{eq:mixing}; and new formulation with mixing inequalities and the proposed inequalities \eqref{eq:newvalid},  respectively. The number of mixing inequalities that can be added to both formulations is limited to 150, and based on the results, this limit is hit by every instance. 
	The ``Time" column reports the average solution time in seconds for the instances that are solved to optimality within time limit,  and the  ``Gap'' column reports the average optimality gap for the instances that reach the time limit. The `` - '' sign under the ``Time '' column indicates that no instance is solved to optimality within time limit. The `` * " sign indicates that CPLEX is not able to solve the instance due to memory limit, and no feasible solution is obtained.   In addition, we only add the proposed inequalities at the root node level.

\exclude{
	\begin{table}[htb]
		\centering
		\caption{Computational results for large-scale instances with different cuts} 	
		\label{tab:3}		\begin{tabular}{|c|c|cc|cc|cc|cc|cc} 
			\hline
			\multicolumn{2}{|c|}{Instances}  &\multicolumn{2}{|c|}{DEP}  &\multicolumn{2}{|c|}{B. D. \& Ineq. \eqref{eq:mixing}-\eqref{eq:newvalid} }  &\multicolumn{2}{|c|}{ N.F. \& \eqref{eq:mixing} }  &\multicolumn{2}{|c|}{ N.F. \& \eqref{eq:mixing}-\eqref{eq:newvalid}  }    \\	
			\hline
			$(\epsilon, n)$ &	$ m  \;(10^3)$  & Time  & Gap (\%)    &Time & Gap (\%)  & Time  & Gap (\%)  & Time   & Gap (\%) \\ 
			\hline
			\multirow{3}{*}{(0.01, 5)}	&	 10 & 176 & 0   &198  & 0  & 75 & 0  & 82 & 0\\
			&		 20   & * & * & 24.16 & 25845 & 1792 & 0.25  & 1073 & 0  \\
			&		30  & *  & *& 33.71 & -  & 3198 & 0.23  & 3359 & 0 \\
			\hline			
			
			\multirow{3}{*}{(0.01, 10)}	&	 10 & * & *   &*  & 15412   & 1982 & 0  & 1947 & 0\\
			&		 20   & * & * & 24.16 & 25845 & - & 0.94  & 3416 & 0.90  \\
			&		30  & *  & *& 33.71 & -  & - & 1.02   & - & 1.01 \\
			\hline

			\multirow{3}{*}{(0.01, 50)}	&	 1  & - & 3.05   &3.30  & 15412  & 638 & 0  & 334 & 0\\
			&		 2   & * & * & 24.16 & 25845 &- & 0.15  & 1073 & 0  \\
			&		3  & *  & *& 33.71 & - & - & 0.64 & 2808 & 0 \\
			\hline	
			\multirow{3}{*}{(0.01, 60)}	&	 1 & - & 3.70 & 14.98 & 5225  & 1920 & 0   & 1346 & 0\\
			&		 2  & * & * & 40.40  & 10506 &- & 0.85   & - & 0.80  \\
			&		3   & *  & * & 78.75  & 29350 & - & 3.85  & - & 3.52 \\
			\hline
			
			\hline				
			
		\end{tabular}
		
	\end{table}

}
	
	\begin{table}[htb]
		\centering
		\caption{Computational results comparing different formulations.} 	
		\label{tab:3}		\begin{tabular}{|c|c|cc|cc|cc|cc|cc} 
			\hline
			\multicolumn{2}{|c|}{Instances}  &\multicolumn{2}{|c|}{DEP \eqref{eq:mixing}, \eqref{eq:newvalid}, \eqref{eq:news}}  &\multicolumn{2}{|c|}{B. D. \& Ineq. \eqref{eq:mixing}-\eqref{eq:newvalid}  }  &\multicolumn{2}{|c|}{ N.F. \& \eqref{eq:mixing} }  &\multicolumn{2}{|c|}{ N.F. \& \eqref{eq:mixing}-\eqref{eq:newvalid}  }    \\	
			\hline
			$(\epsilon, n)$ &	$ m  \;(10^3)$  & Time  & Gap (\%)    &Time & Gap (\%) & Time  & Gap (\%)  & Time   & Gap (\%) \\ 
			\hline
								
			\multirow{3}{*}{(0.01, 5)}	&	 10 & 277 & 0   &199  & 0  & 143 & 0  & 92 & 0\\
			&		 20   & * & * & 860 & 0 & 441 & 0  & 387 & 0  \\
			&		30  & *  & *& - & 0.47  & - & 0.12  & 3534 & 0.05 \\
			\hline			
			
			\multirow{3}{*}{(0.01, 10)}	&	 10 & * & *   &- & 0.16   & - & 1.11  & - & 1.38\\
						&		 20   & * & * & - & 2.17 & - & 0.94  & 3416 & 0.90  \\
						&		30  & *  & *& * & *  & - & 3.28   & - & 2.36 \\
						\hline	
			
					\multirow{3}{*}{(0.01, 30)}	&	 3  & * & * & 1028   &0    & 185 & 0  & 127 & 0\\
						&		 4   & * & * & 1794  & 6.71  &524& 0  & 397 & 0  \\
						&		5  & *  & * & 3324 & 14.66  & 1472 & 0 & 1334 & 0 \\
						\hline	
						\multirow{3}{*}{(0.01, 40)}	&	 3 & * & * & 1179   & 0  & 723 & 0   & 606 & 0\\
						&		 4  & * & *  & - & 23.02   &1864 & 0.24   & 1690 & 0.07  \\
						&		5   & *  & * & -  & 14.51   & 3321 & 0.71  & 2793 & 0.57 \\
						\hline

			\hline				

		\end{tabular}
		
	\end{table}
	
	\begin{table}[htb]
		\centering
		\caption{Additional information for the experiments in Table \ref{tab:3}.} 	
		\label{tab:4}		\begin{tabular}{|c|c|cc|cc|ccc|ccc} 
			\hline
			\multicolumn{2}{|c|}{Instances} &\multicolumn{2}{|c|}{B. D. \& Ineq. \eqref{eq:mixing}-\eqref{eq:newvalid}  }  &\multicolumn{2}{|c|}{ N.F. \& \eqref{eq:mixing} }   &\multicolumn{3}{|c|}{N.F. \& \eqref{eq:mixing}-\eqref{eq:newvalid} }    \\	
			\hline
			$(\epsilon, n)$ &	$ m  \;(10^3)$  & Nodes & Opt.Cut   & Nodes & R.Gap (\%) & Nodes  \;\;  & R. Gap (\%) \;\; & Cuts \\ 
			\hline
			\multirow{3}{*}{(0.01, 5)}& 10 &1828 & 42398 & 2028 & 1.08   &880  & 1.00  & 7\\
						&		 20 & 12493 & 121553& 737  & 3.90 & 623 & 3.43   & 6 \\
						&		30  & 48676 & 373193 & 54667 & 3.88 & 41379 & 3.41  & 7 \\
				\hline
								\multirow{3}{*}{(0.01, 10)}	&	 10  & 55031 & 98625& 36067 & 3.61   &33751  & 3.46  & 12\\
								&		 20 &28112 & 288242 & 29315  & 6.76  & 32529 & 4.96   & 10 \\
								&		30 & * & * & 6327 & 7.92 & 9088 & 6.78  & 10 \\
			\hline
			\multirow{3}{*}{(0.01, 30)}	&	 3  &5047 & 29828 & 837 & 3.60   & 359 & 2.88  & 23\\
			&		 4 & 9696 & 64784 & 2233 & 5.46  & 1892 & 3.59 & 26 \\
			&		5  & 12267 & 88599 & 5917 & 7.42& 5653 & 6.08 & 22 \\
			\hline	
			\multirow{3}{*}{(0.01, 40)}	&	 3 & 5026 & 31672 & 2088 & 3.88  & 1608 & 3.76 & 16\\
			&		 4 & 9397 & 103936 &6494 & 6.36 & 5729 & 2.53  & 19 \\
			&		5  & 8150 & 64154 & 7375 & 3.96  & 7672 & 3.24  & 18 \\
			\hline
		\end{tabular}
		
	\end{table}

	As we can see from Table \ref{tab:3}, the deterministic equivalent formulation cannot solve most of the instances, due to the memory limit. The Benders decomposition provides  slightly better results, since it is able to find a feasible solution. However, for the instances with 30 or 40 time periods,  the optimality gap of Benders decomposition algorithm is very large. The proposed new formulation provides a big improvement. It can solve most of the instances to optimality. For the instances that reach the time limit, the optimality gap is small. Finally, the effectiveness of the proposed inequalities \eqref{eq:newvalid} is shown in the last column. It provides the best results, with generally the smallest solution time and optimality gap. 
	
	In Table \ref{tab:4}, we report additional information on the average root gap (``R.Gap \%") and  number of nodes explored during the branch-and-bound process (``Nodes").  The column ``Opt.Cut" reports the number of optimality cuts added to  the Benders master problem.  The column ``Cuts" reports the number of the proposed inequalities \eqref{eq:newvalid} added to the new formulation  in addition to the mixing cuts \eqref{eq:mixing}, which are special cases of inequalities   \eqref{eq:newvalid}.  As we can see from Table \ref{tab:4},  because we only add the proposed inequalities \eqref{eq:newvalid} at the root node after adding the violated inequalities \eqref{eq:mixing}, the number of additional  inequalities \eqref{eq:newvalid}  is  not very large. However, the new cuts are   beneficial; the number of branch-and-bound nodes is  reduced with the proposed inequalities \eqref{eq:newvalid}, and the root node gap with the new inequalities is also smaller in most cases. As a result, more instances are solved to optimality within the time limit.  In addition, compared with the results from Benders decomposition, the proposed new formulation uses much fewer ``optimality cuts" to capture the second-stage inventory value. For example, for the instances where $m = 10000$ and $n = 5$, the Benders decomposition algorithm requires 42398 optimality cuts. In contrast,  the proposed new formulation only requires $m \times \epsilon \times n = 500$ additional inequalities to fully capture the second-stage inventory value.  As a result,  the proposed new formulation \eqref{eq:newformulation} provides a significant improvement in solution time.   
  
  We also tested the effectiveness of  adapting the  extended formulation of \cite{extended}  for  deterministic ULS to strengthen the deterministic equivalent of SPLS. However, we observe that it slows down the deterministic equivalent model further, so we do not report our computations with this formulation.

	\exclude{
		
		\begin{table}[htb]
			\centering
			\caption{Additional results for medium-scale instances with valid inequalities \eqref{eq:news}} 	
			\label{tab:stock2}		\begin{tabular}{|c|c|cc|cc|ccc|c} 
				\hline
				\multicolumn{2}{|c|}{Instances}   &\multicolumn{2}{|c|}{ D. \& \eqref{eq:mixing}}  &\multicolumn{2}{|c|}{D. \&  \eqref{eq:mixing}-\eqref{eq:newvalid} }      &\multicolumn{3}{|c|}{D. \&  \eqref{eq:mixing}-\eqref{eq:newvalid}\&\eqref{eq:news} }    \\	
				\hline
				$(\epsilon, n)$ &	$ m $     & Nodes  & R.Gap (\%)  & Nodes  & R.Gap (\%)  & Nodes   & R.Gap (\%) & Cuts \\ 
				\hline
				\multirow{3}{*}{(0.05, 50)}	&	 400 & 11933  & 19.44  & 11473 & 19.44   & 8774 & 18.68 & 60\\
				&		 500  &6874 & 38.09 & 7153 & 42.57 & 6372 & 34.17 & 71 \\
				&		600   & 6371 & 50.49 & 6227 & 47.42 & 5572 & 40.74 & 85 \\
				\hline
				
				\multirow{3}{*}{(0.05, 60)}	&	400   &7352 & 32.25  &7029 & 30.41 & 4678 & 28.67 & 72 \\
				&		500     & 9959 &  42.19  & 9041  & 39.52& 4886 & 38.57 & 80  \\
				&		 600    & 9002   & 45.78  & 8527  & 42.94 & 5061  & 40.62 &93  \\		
				\hline	
				
				\multirow{3}{*}{(0.1, 50)}	&	400   &949812 & 13.10  &7029 & 30.41 & 4678 & 28.67 & 72 \\
				&		500     & 9959 &  42.19  & 9041  & 39.52& 4886 & 38.57 & 80  \\
				&		 600    & 9002   & 45.78  & 8527  & 42.94 & 5061  & 40.62 &93  \\		
				\hline	
				
			\end{tabular}
			
		\end{table}
		
	}
	\exclude{
		First, in Tables \ref{tab:ran} and \ref{tab:s}, we test the effecitveness of the inequalities that do not involve the stock variables. To this end, we set $\vh  = 0$, and do not consider the stock variables in the deterministic equivalent formulation. As we can see from Tables \ref{tab:ran} and \ref{tab:s}, the performance of the proposed inequalities \eqref{eq:newvalid} provides the best results. More instances can be solved to optimality within time limit. The basic mixing inequality shows big improvement over the deterministic equivalent formulation, but it is still not able to solve many instances to optimality, due to a lack of strengthening in terms of $\vx$.  In addition, inequality \eqref{eq:weakcut2} provides no improvement, indicating that the big-$M$ type  $(\ell, S)$ inequalities are too weak to provide any computational benefit.
		
		
		In Table \ref{tab:stock}, we study the effectiveness of inequality \eqref{eq:news} under the same settings, except we let  $h_{ji}$ follow a discrete uniform distribution over [30, 60]. We compare the effectiveness of the following settings: solving DEF with default CPLEX, adding  mixing inequalities \eqref{eq:mixing} only, adding the proposed inequalities \eqref{eq:newvalid} only, and adding inequalities \eqref{eq:newvalid} and     \eqref{eq:news} (in column `` Ineq.\ \eqref{eq:newvalid} \& \eqref{eq:news}").   We only seek and add the violated inequalities \eqref{eq:news}  for all $j = \sigma_{i(1)}$, and $i = [n - 1]$ due to the facet condition in Proposition \ref{pro:1}. 
		
		\begin{table}[htb]
			\centering
			\caption{Computational Results with Stock Variables} 	
			\label{tab:stock}		\begin{tabular}{|c|c|c|c|cc|c|c|} 
				\hline
				\multicolumn{1}{|c|}{Instances} &DEF  &\multicolumn{1}{|c|}{Ineq.\ \eqref{eq:mixing}}     &\multicolumn{1}{|c|}{Ineq.\ \eqref{eq:newvalid}}    &\multicolumn{2}{|c|}{Ineq.\ \eqref{eq:newvalid}\& \eqref{eq:news}} \\	
				\hline
				$(n, m, \epsilon)$ & Gap (\%)    & Gap (\%)    & Gap (\%) & Time   & Gap (\%) \\ 
				\hline
				(30, 300, 0.05)& 19.8    &8.3      & 7.0 & 3600   & 1.2\\
				(30, 300, 0.01)&  12.6 & 4.9    & 5.3   & 3146    & 0\\
				(40, 400, 0.05)& 34.9  & 23.0    & 19.1 & 3600    & 6.6\\
				(40, 400, 0.01)& 18.5 & 10.6     & 5.8  & 3419    & 2.9\\
				(50, 500, 0.05) & 34.6   & 25.2     & 16.6 & 3600 & 7.5\\
				(50, 500, 0.01) & 26.7 & 17.5    & 8.2 & 3600   & 3.0\\
				
				\hline 
			\end{tabular}
			
		\end{table}
		
		As we can see from  Table \ref{tab:stock}, after introducing the inventory cost, the instances become much harder, because there are $n\times m$ many more variables.  The deterministic equivalent formulation  cannot be solved within the time limit and stops with large optimality gaps even with the addition of the mixing inequalities \eqref{eq:mixing} and the first class of proposed inequalities \eqref{eq:newvalid}.  However, the setting with  proposed inequalities \eqref{eq:news}  together with \eqref{eq:newvalid}, provides the best results. It is able to solve some instances to optimality, and  for the instances that reach the time limit, the optimality gaps are relatively small.  We note that these results were obtained with a less efficient version of the separation algorithms, the separation of inequalities \eqref{eq:mixing} is a factor of $O(n)$ slower than the version presented in Proposition \ref{prop:separ} and problem \eqref{eq:sep2} is solved twice (for inequality \eqref{eq:newvalid} and for \eqref{eq:news}) even though we only need to solve it once to generate the most violated inequalities \eqref{eq:newvalid} and  \eqref{eq:news} simultaneously. We are currently implementing  more efficient separation algorithms. In addition, following \cite{liu}, we plan to develop Benders decomposition algorithms for this problem, where inequalities \eqref{eq:newvalid} and  \eqref{eq:news}  can be used in the first and second stage problems, respectively. We will report our experience with these experiments in the full version of this paper. 
	}
	
	\exclude{
		\begin{table}[htb]
			\centering
			\caption{Computational Results without mixing inequalities} \label{tab:ez}
			\begin{tabular}{|c|c||c|c|c|c|c|}
				\hline
				\multicolumn{2}{|c||}{Instances}   &\multicolumn{2}{|c|}{Ineq. \eqref{eq:weakcut}}   &\multicolumn{2}{|c|}{Ineq. \eqref{eq:weakcut2}}    \\	
				\hline
				Number  & $(n, m, \epsilon)$ & Time  & Opt.Gap (\%)  & Time (sec.)  & Opt.Gap (\%) \\ 
				1 & (5, 600, 0.05) &2418  &0 & 1962 & 0\\
				2 & (5, 800, 0.05) & 3600 & 4.3 & 3376 & 0 \\
				3 & (5, 1000, 0.05) & 3600 & 10.6 & 3600 & 5.8 \\
				4 & (5, 600, 0.1) &3600  & 5.7& 2816 & 0\\
				5 & (5, 800, 0.1) & 3600 & 10.6 & 3600 & 2.4 \\
				6 & (5, 1000, 0.1) & 3600 & 9.2 & 3600 & 7.1 \\
				\hline 
			\end{tabular}
			
		\end{table}

	}	
	
	\exclude{	
		Firstly, we compare the effectiveness of \eqref{eq:weakcut} with \eqref{eq:weakcut2} without any strengthening method  based on mixing inequalities. Next, we test the performance of \eqref{eq:weakcut} and \eqref{eq:weakcut2} with the additional strengthening of mixing inequalities.  The main results are given in Table 1 and Table 2. 
		
		\begin{table}[htb]
			\centering
			\caption{Computational Results without mixing inequalities} \label{tab:ez}
			
			\begin{tabular}{|c|c||c|c|c|c|c|}
				\hline
				\multicolumn{2}{|c||}{Instances}   &\multicolumn{2}{|c|}{Ineq. \eqref{eq:weakcut}}   &\multicolumn{2}{|c|}{Ineq. \eqref{eq:weakcut2}}    \\	
				\hline
				Number  & $(n, m, \epsilon)$ & Time   & Opt.Gap (\%)  & Time  & Opt.Gap (\%) \\ 
				1 & (5, 600, 0.05) &2418  &0 & 1962 & 0\\
				2 & (5, 800, 0.05) & 3600 & 4.3 & 3376 & 0 \\
				3 & (5, 1000, 0.05) & 3600 & 10.6 & 3600 & 5.8 \\
				4 & (5, 600, 0.1) &3600  & 5.7& 2816 & 0\\
				5 & (5, 800, 0.1) & 3600 & 10.6 & 3600 & 2.4 \\
				6 & (5, 1000, 0.1) & 3600 & 9.2 & 3600 & 7.1 \\
				\hline 
			\end{tabular}
			
		\end{table}

		\begin{table}[htb]
			\centering
			\caption{Computational Results with mixing inequalities} \label{tab:ez}
			
			\begin{tabular}{|c|c||c|c|c|c|c|}
				\hline
				\multicolumn{2}{|c||}{Instances}   &\multicolumn{2}{|c|}{Inequalities \eqref{eq:weakcut}}   &\multicolumn{2}{|c|}{Inequalities \eqref{eq:weakcut2}}    \\	
				\hline
				Number  & $(n, m, \epsilon)$ & Time (sec.)   & Opt.Gap (\%)  & Time (sec.)  & Opt.Gap (\%) \\ 
				1 & (5, 600, 0.05) &307  &0 & 198 & 0\\
				2 & (5, 800, 0.05) & 556 & 0 & 461 & 0 \\
				3 & (5, 600, 0.1) &1304  & 0& 995 & 0\\
				4 & (5, 800, 0.1) & 3071 & 0 & 2513 & 0 \\
				5 & (10, 600, 0.05) &3492  &0 & 2740 & 0\\
				6 & (10, 800, 0.05) & 3600 & 7.2 & 3372 & 0 \\
				7 & (10, 600, 0.1) &3600  & 10.4& 3600 & 3.9\\
				8 & (10, 800, 0.1) & 3600 & 14.1 & 3600 & 5.5 \\
				\hline 
			\end{tabular}
			
		\end{table}
		
		In Table 4.1 and Table 4.2, each entry averages the results over 2 instances. We use integer uniform distribution to generate all data, and each scenario is equally likely. A time limit of 1 hour is set.  As we can see from Table 4.1, the performance of inequalities \eqref{eq:weakcut2} is better than \eqref{eq:weakcut}. More instances can be solved to optimality within time limit. In addition, for the instances that hit the time limit, the ending gaps for instances using \eqref{eq:weakcut2} is smaller than those using \eqref{eq:weakcut}.
		
		As we can see from Table 4.2, with the additional strengthening from the mixing inequalities, we get significant improvement from both \eqref{eq:weakcut} and \eqref{eq:weakcut2}. However, \eqref{eq:weakcut2} still provides better results.
		
		In Table 4.3, we compare the root gap between inequalities \eqref{eq:weakcut} and \eqref{eq:weakcut2} without the strengthening from mixing inequalities. As we can see from the results, \eqref{eq:weakcut2} provides smaller root gap, which suggests that the computational performance of \eqref{eq:weakcut2}  is stronger than \eqref{eq:weakcut}. 
		\begin{table}[htb]
			\centering
			\caption{Root Gaps without mixing inequalities} 
			
			\begin{tabular}{|c|c||c|c|c|c|c|}
				\hline
				\multicolumn{2}{|c||}{Instances}   &\multicolumn{1}{|c|}{Inequalities \eqref{eq:weakcut}}   &\multicolumn{1}{|c|}{Inequalities \eqref{eq:weakcut2}}    \\	
				\hline
				$\epsilon$ & $(n, m)$    & Root Gap (\%)    & Root Gap (\%) \\ 
				\hline
				1 & (5, 600, 0.05) &6.7  &5.2 \\
				2 & (5, 800, 0.05) & 13.6 & 5.9 \\
				3 & (5, 600, 0.1) &18.2  & 11.5\\
				4 & (5, 800, 0.1) &15.9 & 10.3  \\
				\hline 
			\end{tabular}
			
		\end{table}
	}

	\exclude{
		\section{Decomposition Algorithm}
		
		In this section, we propose a decomposition algorithm. Firstly, we provide the master problem, in which we project out the stock variables:
		\begin{subequations}
			\label{eq:master}
			\begin{align}
				\min_{x, y, z, \theta} \; &f^\top x + c^\top y + \sum_{j = 1}^m \pi_j  \theta_j  \\
				\label{eq:master2}  &\sum_{j = 1}^m \pi_j z_j \leq \epsilon \\
				&\sum_{i = 1}^n y_i \geq \sum_{i = 1}^n d_{ji} &  j \in \Omega\\
				\label{eq:master3}	&y_i \leq M_i x_i, & i \in N \\
				\label{eq:masterf}	& \alpha_1 x + \beta_1 y + Mz \geq \gamma_1 \\
				\label{eq:mastero}	& \theta_j + M_j z_j \geq \alpha_2 x + \beta_2 y + \gamma_2 \\	
				& y  \in \mathbb R_+^n, x \in \mathbb B^n,  z \in \mathbb B^m,
			\end{align}
		\end{subequations}
		where $\theta_j$ is introduced to capture the second-stage cost of scenario $j$, for all $j \in \Omega$. Inequalities \eqref{eq:masterf} contains the so-called feasibility cuts to guarantee that if $z_j = 0$, then $\sum_{i = 1}^t y_i \geq \sum_{i = 1}^t d_{ji} $, for all $j \in \Omega$, where $\alpha_1$, $\beta_1$ and $\gamma_1$ are vectors with appropriate dimension. Otherwise, the constant $M$ is sufficiently large to make \eqref{eq:masterf} redundant.  In addition, inequalities \eqref{eq:mastero} contains the optimality cuts such that if $z_j = 0$, then:
		\begin{align*}
			\theta_j \geq \min_{s, r} \; & h^\top s_j + p^\top r_j  \\
			\text{s.t.}\;  	& s_{jt} \geq \sum_{i = 1}^t(y_i - d_{ji})  &  t \in N, j \in \Omega\\
			& r_{jt} \geq \sum_{i = 1}^t(d_{ji} - y_i)  &  t \in N , j \in \Omega\\
			&s_j, r_j \in \mathbb R^n_+,  j \in \Omega .
		\end{align*}
		Otherwise, the big-M coefficient $M_j$ will make \eqref{eq:mastero} redundant. Here $\alpha_2, \beta_2$ and $\gamma_2$ are coefficient vectors with appropriate dimension. Note that the problem stated above has the simple recourse structure. 
		
		Hence, the key question to the decomposition algorithm is how to obtain strong valid feasibility and optimality cuts. 
		
		\subsection{Subproblems on original space} 
		In this section, we propose a class of valid feasibility cuts that works on the original space. Let $(\hat x, \hat y)$ be the current solution of the master problem, for each scenario such that $z_j = 0$, we solve the following subproblem for all $j \in \Omega$: 
		\begin{subequations}
			\label{eq:sub1}
			\begin{align}
				\min_{s} \; & h^\top s_j   \\
				\text{s.t.}\;  	& s_{jt} = \sum_{i = 1}^t(\hat y_i - d_{ji})  &  t \in N\\
				&s_j \in \mathbb R^n_+ .
			\end{align}
		\end{subequations}
		If \eqref{eq:sub1} is infeasible, then we add a big-M typed Benders feasibility cut to the master problem. Otherwise, if \eqref{eq:sub1} is feasible but not optimal, then we add a big-M typed Benders optimality cut to the master problem.  It is easy to see that the class of feasibility cuts that we will get from \eqref{eq:sub1} takes the following form:
		
		\begin{align}
			\label{eq:weakcut}
			\sum_{i = 1}^t y_i \geq \sum_{i = 1}^t d_{ji}(1 - z_j), &  j \in \Omega.
		\end{align}

		In addition, for each scenario such that $z_j = 1$, we solve the following subproblem:
		\begin{subequations}
			\label{eq:sub2}
			\begin{align}
				\theta_j \geq \min_{s, r} \; & h^\top s_j + p^\top r_j  \\
				\text{s.t.}\;  	& s_{jt} \geq \sum_{i = 1}^t( \hat y_i - d_{ji})  &  t \in N\\
				& r_{jt} \geq \sum_{i = 1}^t(d_{ji} - \hat y_i)  &  t \in N \\
				&s_j, r_j \in \mathbb R^n_+,
			\end{align}
		\end{subequations}
		since problem \eqref{eq:sub2} is always feasible, we only seek and add big-M type Benders optimality cuts to the master problem to cut off any suboptimal solution. 
		
		However, although we can apply the mixing inequalities to strengthen \eqref{eq:weakcut},  both \eqref{eq:sub1} and \eqref{eq:sub2} are weak formulations, which may cause fractional $x$ in the master problem. Hence, we apply the extended formulations  proposed by \cite{extended} for the subproblems.
		
		\subsection{Subproblems on extended space}
		
		Let $w_{jik}$ be production in period $i$ for satisfying the demand of period $k$ in scenario $j$, for all $i \in N$, $k \in N$ and $j \in \Omega$. For each scenario such that $z_j = 0$, we solve the following subproblem:
		\begin{subequations}
			\label{eq:sub3}
			\begin{align}
				\min_{s} \; & h^\top s_j   \\
				\text{s.t.}\;  	& \sum_{i = 1}^t w_{jit} = d_{jt}  & t \in N \\
				&w_{jit} \leq d_{jt} \hat x_i  & t \in N, i \in N \\
				& s_{jt} = \sum_{i = 1}^t \sum_{k = t + 1}^n w_{jik} & t \in N \\
				& \sum_{k = i}^n w_{jik} \leq  \hat y_i  & i \in N \\
				& w_{jik} = 0, & i > k	 \\
				& w_{jik} \geq 0. & t \in N, i \in N 
			\end{align}
		\end{subequations}
		If \eqref{eq:sub3} is infeasible for given $(\hat x, \hat y)$, we seek and add a modified Benders feasibility cut to the master problem. Otherwise, if \eqref{eq:sub3} is feasible but not optimal, we add  modified Benders optimality cut to the master problem. 
		
		It is well known that the feasibility cuts we get from \eqref{eq:sub3} take the following form (the so-called $(\ell, S)$ inequalities proposed in \cite{ls},  with additional big-M terms):  
		\begin{align}
			\label{eq:weakcut2}
			\sum_{i \in S} y_i + \sum_{i \in \bar S}D_{ji\ell}x_i \geq D_{j1\ell}(1 - z_j),
		\end{align}
		where $\ell \in N$, $S \subseteq \{0, \hdots , \ell\}$, $\bar S =  \{0, \hdots , \ell\} \setminus S$, and $D_{jik} = \sum_{t = i}^k d_{jt}$, for all $j \in \Omega$. It is known that the $(\ell, S)$ inequalities are necessary to describe the convex hull for the single scenario problem. Note that, the feasibility cuts \eqref{eq:weakcut2} subsume \eqref{eq:weakcut} (let $S=[1,\ell]$).  In addition, we can also apply the mixing idea to strengthen \eqref{eq:weakcut2}. 
		
		Similarly, for each scenario such that $z_j = 1$, we solve the following subproblem:
		
		\begin{subequations}
			\label{eq:sub4}
			\begin{align}
				\min_{s} \; & h^\top s_j + p^\top r_j   \\
				\text{s.t.}\;  	& \sum_{i = 1}^n w_{jit} = d_{jt}  & t \in N \\
				&w_{jit} \leq d_{jt} \hat x_i  & t \in N, i \in N \\
				& s_{jt} = \sum_{i = 1}^t \sum_{k = t + 1}^n w_{jik} & t \in N \\
				& r_{jt} = \sum_{i = t + 1}^n \sum_{k = 1}^t w_{jik} & t \in N \\
				& \sum_{k = i}^n w_{jik} \leq  \hat y_i  & i \in N \\
				& w_{jik} \geq 0 & k \in N, i \in N 
			\end{align}
		\end{subequations}
		
		If we obtain a suboptimal solution, then we seek and add a modified Benders optimality cut to the master problem.

	}

	\exclude{
		\begin{proof}
			In order to show that inequality \eqref{eq:news} is facet-defining, we need to find $m + 2n + mn - 1$ affinely independent points which satisfy inequality \eqref{eq:news} at equality.   
			
			Let $g(t_{i(j)})$, for all $t_{i(j)} \in T_i$ and $i \in N$,  be an unique mapping such that the scenario $t_{i(j)}$ has the $g(t_{i(j)})$-th largest cumulative demand at time period $i$.

			First, for a fixed $\ell \in N$, we consider the following set of points, which are presented using the form $(\vx, \vy, \vz, \vs)$:
			\begin{align*}
				&(\vx^j, \vy^j, \vz^j, \vs^j) = (\ve_1 + \ve_{\ell + 1} , \bar \vy_j^\ell + \ve_{\ell + 1}M_{\ell + 1}, \sum_{p = 1}^{g(t_{i(j)}) - 1} \ve_{\sigma_\ell (p)},  \vs^j),  j = 1, 2, \hdots, a_\ell\\
				&(\vx^{k + 1}, \vy^{k + 1}, \vz^{k + 1}, \vs^{k + 1}) = (\ve_1 + \ve_{\ell + 1} , \bar \vy_{k + 1}^\ell + \ve_{\ell + 1}M_{\ell + 1}, \sum_{p = 1}^{k} \ve_{\sigma_\ell (p)},  \vs^{k + 1}),
			\end{align*}
			where $ r^j_{pi} = \max \;  \{\;\sum_{u = 1}^i (y^j_{i} - d_{pu}), 0  \; \} $ , for all $j \in \Omega$.  It is easy to see that these $|T_\ell| + 1$ points are feasible, affinely independent, and satisfy inequality \eqref{eq:news} at equality
			
			Next, for each $j \in \Omega \setminus T_\ell$, we construct the following set of points:
			\begin{align*}
				(\vx^j, \vy^j, \vz^j, \vs^j) = (\ve_1 + \ve_{\ell + 1} , \bar \vy_1^\ell + \ve_{\ell + 1}M_{\ell + 1},  \ve_{j},  \vs^j),  j \in \Omega \setminus T_\ell 
			\end{align*}
			It is easy to see that these $ m - |T_\ell| $ points are feasible, affinely independent, and satisfy inequality \eqref{eq:news} at equality.
			
			Next, for all $s_{ji}$ such that $j \not =  \sigma_{\ell - 1 (1)}$ or $i \not = \ell - 1$, we construct the following set of points:
			\begin{align*}
				(\ve_1 + \ve_{\ell + 1} , \bar \vy_{k + 1}^\ell + \ve_{\ell + 1}M_{\ell + 1}, \sum_{p = 1}^{k} \ve_{\sigma_\ell (p)},  \vs^{k + 1} + \ve_{ji} \triangle),
			\end{align*}
			where $\triangle > 0$, and $ \ve_{ji}$ is a unit vector, in which the $i$-th time period of the $j$-th scenario equals to 1, and 0 otherwise.  It is easy to see that these $mn - 1$ number of points are feasible, affinely independent, and satisfy inequality \eqref{eq:news} at equality.
			
			Next,  to simplify notation, we define a function $\vs = f(\vy)$, which describes the following relation:
			$$s_{ji} = \max \{ \; 0,  \sum_{p =1}^i (y_p - d_{jp})    \}  $$
			And   we construct the following set of points, 
			\begin{align*}
				&(\tilde \vx^\ell, \tilde \vy ^\ell, \tilde \vz ^\ell, \tilde \vs^\ell) = (\ve_1 + \ve_{\ell} , \bar \vy_{1}^{\ell - 1} + \ve_{\ell}M_{\ell}, \mathbf 0,  f(\tilde \vy ^\ell)), \\
				&(\tilde \vx^{\ell + 1}, \tilde \vy ^{\ell + 1}, \tilde \vz ^{\ell + 1}, \tilde \vs^{\ell + 1}) = (\ve_1 + \ve_{\ell} + \ve_{\ell + 1} , \bar \vy_{1}^{\ell - 1} + \ve_{\ell}M_{\ell} + \ve_{\ell + 1} \triangle, \mathbf 0,  f(\tilde \vy ^{\ell+ 1})), \\
				&(\tilde \vx^j, \tilde \vy ^j, \tilde \vz ^j, \tilde \vs^j) = (\ve_1 + \ve_{\ell} + \ve_j, \bar \vy_{1}^{\ell - 1} + \ve_{\ell}M_{\ell}, \mathbf 0,  f(\tilde \vy ^j)), j = \ell + 2, \hdots , n\\
				&(\tilde \vx^{j'}, \tilde \vy ^{j'}, \tilde \vz ^{j'}, \tilde \vs^{j'}) = (\ve_1 + \ve_{\ell} + \ve_j, \bar \vy_{1}^{\ell - 1} + \ve_{\ell}M_{\ell} + \ve_j \triangle, \mathbf 0,  f(\tilde \vy ^j)), j = \ell + 2, \hdots , n 
			\end{align*}
			
			It is easy to see that these $2(n - \ell)$ points are feasible, affinely independent and satisfy inequality \eqref{eq:news} at equality.
			
			Next, for each $i = 2, \hdots, \ell$, we construct the following set of points \comment{I need to figure out the notation for the last $2\ell - 1$ points, will be done tomorrow}:
			
		\end{proof}
	}
	
	\exclude{
	
	\section{Conclusion}
		In this paper, we study the polyhedral structure of static probabilistic lot-sizing problem. We propose valid inequalities that integrate information from the chance constraint and the binary setup variables. We prove that the proposed inequalities subsume existing inequalities for this problem, and they are facet-defining under certain conditions.  We propose a new formulation that significantly reduces the number of variables and constraints of the deterministic equivalent program. We also show that the proposed formulation is generally applicable to two-stage chance-constrained optimization problem with simple recourse.  The computational results show that the proposed inequalities and new formulation are effective.
	
	}

\section*{Acknowledgements}
The authors are supported, in part, by the National Science Foundation grant 1055668.

	\bibliographystyle{apalike}
	\bibliography{LS}

\begin{thebibliography}{}

\bibitem[Abdi and Fukasawa, 2016]{abdi}
Abdi, A. and Fukasawa, R. (2016).
\newblock On the mixing set with a knapsack constraint.
\newblock {\em Mathematical Programming}.
\newblock DOI: 10.1007/s10107-016-0979-5.

\bibitem[Ahmed et~al., 2003]{ahmed03}
Ahmed, S., King, A., and Parija, G. (2003).
\newblock A multi-stage stochastic integer programming approach for capacity
  expansion under uncertainty.
\newblock {\em Journal of Global Optimization}, 26:3--24.

\bibitem[Barany et~al., 1984]{ls}
Barany, I., van Roy, T., and Wolsey, L.~A. (1984).
\newblock Uncapacitated lot-sizing: The convex hull of solutions.
\newblock {\em Mathematical Programming Study}, 22:43--43.

\bibitem[Beraldi and Ruszczy\'{n}ski, 2002]{2002b}
Beraldi, P. and Ruszczy\'{n}ski, A. (2002).
\newblock A branch and bound method for stochastic integer programs under
  probabilistic constraints.
\newblock {\em Optimization Methods and Software}, 17:359--382.

\bibitem[Birge and Louveaux, 1997]{birgebook:97}
Birge, J. and Louveaux, F. (1997).
\newblock {\em Introduction to stochastic programming}.
\newblock Springer, New York.

\bibitem[Bodur et~al., 2014]{merve}
Bodur, M., Dash, S., {G\"unl\"uk}, O., and Luedtke, J. (2014).
\newblock Strengthened {Benders} cuts for stochastic integer programs with
  continuous recourse.
\newblock \url{http://www.optimization-online.org/DB_HTML/2014/03/4263.html}.

\bibitem[Calafiore and Campi, 2005]{Cal1}
Calafiore, G. and Campi, M. (2005).
\newblock Uncertain convex programs: randomized solutions and confidence
  levels.
\newblock {\em Mathematical Programming}, 102:25--46.

\bibitem[Calafiore and Campi, 2006]{Cal2}
Calafiore, G. and Campi, M. (2006).
\newblock The scenario approach to robust control design.
\newblock {\em IEEE Transactions on Automatic Control}, 51:742--753.

\bibitem[Campi and Garatti, 2011]{Campi}
Campi, M. and Garatti, S. (2011).
\newblock A sampling-and-discarding approach to chance-constrained
  optimization: feasibility and optimality.
\newblock {\em Journal of Optimization Theory and Applications}, 148:257--280.

\bibitem[Charnes and Cooper, 1963]{charnes:63}
Charnes, A. and Cooper, W. (1963).
\newblock Deterministic equivalents for optimizing and satisficing under chance
  constraints.
\newblock {\em Operations Research}, 11:18--39.

\bibitem[Charnes et~al., 1958]{charnes:58}
Charnes, A., Cooper, W., and Symonds, G. (1958).
\newblock Cost horizons and certainty equivalents: an approach to stochastic
  programming of heating oil.
\newblock {\em Management Science}, 4:235--263.

\bibitem[Dentcheva et~al., 2000]{Concavity}
Dentcheva, D., Pr\'{e}kopa, A., and Ruszczy\'{n}ski, A. (2000).
\newblock Concavity and efficient points of discrete distributions in
  probabilistic programming.
\newblock {\em Mathematical Programming}, 89:55--77.

\bibitem[{Di Summa} and Wolsey, 2008]{DiSumma2008}
{Di Summa}, M. and Wolsey, L.~A. (2008).
\newblock Lot-sizing on a tree.
\newblock {\em Operations Research Letters}, 36(1):7 -- 13.

\bibitem[Federgruen and Tzur, 1991]{logn}
Federgruen, A. and Tzur, M. (1991).
\newblock A simple forward algorithm to solve general dynamic lot sizing models
  with $n$ periods in $\text{O}(n \log(n)$) or $\text{O}(n)$ time.
\newblock {\em Management Science}, 37(8):909--925.

\bibitem[Gade and K\"{u}\c{c}\"{u}kyavuz, 2013]{gk13}
Gade, D. and K\"{u}\c{c}\"{u}kyavuz, S. (2013).
\newblock Formulations for dynamic lot sizing with service levels.
\newblock {\em Naval Research Logistics}, 60(2):87--101.

\bibitem[Guan et~al., 2006a]{Guan2006}
Guan, Y., Ahmed, S., Miller, A.~J., and Nemhauser, G.~L. (2006a).
\newblock On formulations of the stochastic uncapacitated lot-sizing problem.
\newblock {\em Operations Research Letters}, 34(3):241 -- 250.

\bibitem[Guan et~al., 2006b]{yongpei}
Guan, Y., Ahmed, S., Nemhauser, G., and Miller, A. (2006b).
\newblock A branch-and-cut algorithm for the stochastic uncapacitated
  lot-sizing problem.
\newblock {\em Mathematical Programming}, 105:55--84.

\bibitem[Guan and Miller, 2008]{miller}
Guan, Y. and Miller, A.~J. (2008).
\newblock Polynomial time algorithms for stochastic uncapacitated lot-sizing
  problems.
\newblock {\em Operations Research}, 56:1172--1183.

\bibitem[G{\"u}nl{\"u}k and Pochet, 2001]{Gunluk}
G{\"u}nl{\"u}k, O. and Pochet, Y. (2001).
\newblock Mixing mixed-integer inequalities.
\newblock {\em Mathematical Programming}, 90:429--457.

\bibitem[Huang and K\"u\c{c}\"ukyavuz, 2008]{huang}
Huang, K. and K\"u\c{c}\"ukyavuz, S. (2008).
\newblock On stochastic lot-sizing problems with random lead times.
\newblock {\em Operations Research Letters}, 36(3):303--308.

\bibitem[Jiang and Guan, 2011]{ruiwei}
Jiang, R. and Guan, Y. (2011).
\newblock An $\text{O}(n^2)$ time algorithm for the stochastic uncapacitated
  lot-sizing problem with random lead times.
\newblock {\em Operations Research Letters}, 39:74--77.

\bibitem[Krarup and Bilde, 1977]{extended}
Krarup, K. and Bilde, O. (1977).
\newblock Plant location, set covering and economic lot-sizes: {An $O(mn)$}
  algorithm for structured problems.
\newblock In Collatz, L., editor, {\em Optimierung bei Graphentheoretischen und
  Ganzzahligen Probleme}. Birkhauser Verlag.

\bibitem[K\"{u}\c{c}\"{u}kyavuz, 2012]{kucukyavuz:12}
K\"{u}\c{c}\"{u}kyavuz, S. (2012).
\newblock On mixing sets arising in chance-constrained programming.
\newblock {\em Mathematical Programming}, 132:31--56.

\bibitem[K\"u\c{c}\"ukyavuz, 2014]{simgebook}
K\"u\c{c}\"ukyavuz, S. (2014).
\newblock Mixed-integer optimization approaches for deterministic and
  stochastic inventory management.
\newblock In {\em INFORMS Tutorials in Operations Research}, volume~6.

\bibitem[K\"{u}\c{c}\"{u}kyavuz and Pochet, 2009]{convexhull}
K\"{u}\c{c}\"{u}kyavuz, S. and Pochet, Y. (2009).
\newblock Uncapacitated lot-sizing with backlogging: The convex hull.
\newblock {\em Mathematical Programming}, 118:151--175.

\bibitem[Lejeune and Ruszczy\'nski, 2007]{LR07}
Lejeune, M.~A. and Ruszczy\'nski, A. (2007).
\newblock An efficient trajectory method for probabilistic
  inventory-production-distribution problems.
\newblock {\em Operations Research}, 55(2):378--394.

\bibitem[Liu et~al., 2014]{liu}
Liu, X., K\"{u}\c{c}\"{u}kyavuz, S., and Luedtke, J. (2014).
\newblock Decomposition algorithms for two-stage chance-constrained programs.
\newblock {\em Mathematical Programming}.
\newblock DOI: 10.1007/s10107-014-0832-7.

\bibitem[Luedtke, 2013]{luedtke:cc13}
Luedtke, J. (2013).
\newblock A branch-and-cut decomposition algorithm for solving
  chance-constrained mathematical programs with finite support.
\newblock {\em Mathematical Programming}, 146:219--244.

\bibitem[Luedtke and Ahmed, 2008]{sample}
Luedtke, J. and Ahmed, S. (2008).
\newblock A sample approximation approach for optimization with probabilistic
  constraints.
\newblock {\em SIAM Journal on Optimization}, 19:674--699.

\bibitem[Luedtke et~al., 2010]{luedtke:mp10}
Luedtke, J., Ahmed, S., and Nemhauser, G. (2010).
\newblock An integer programming approach for linear programs with
  probabilistic constraints.
\newblock {\em Mathematical Programming}, 12:247--272.

\bibitem[Miller and Wagner, 1965]{Miller1}
Miller, B.~L. and Wagner, H.~M. (1965).
\newblock Chance constrained programming with joint constraints.
\newblock {\em Operations Research}, 13(6):930--965.

\bibitem[Nemirovski and Shapiro, 2005]{Nem}
Nemirovski, A. and Shapiro, A. (2005).
\newblock Scenario approximation of chance constraints.
\newblock In Calafiore, G. and Dabbene, F., editors, {\em Probabilistic and
  Randomized Methods for Design Under Uncertainty}, pages 3--48. Springer,
  London.

\bibitem[Pochet and Wolsey, 1988]{backlogging}
Pochet, Y. and Wolsey, L.~A. (1988).
\newblock Lot-size models with backlogging: Strong reformulations and cutting
  planes.
\newblock {\em Mathematical Programming}, 40:317--335.

\bibitem[Pr\'{e}kopa, 1973]{P73}
Pr\'{e}kopa, A. (1973).
\newblock Contributions to the theory of stochastic programming.
\newblock {\em Mathematical Programming}, 4:202--221.

\bibitem[Pr\'{e}kopa, 1990]{P90}
Pr\'{e}kopa, A. (1990).
\newblock Dual method for the solution of a one-stage stochastic programming
  problem with random {RHS} obeying a discrete probability distribution.
\newblock {\em Mathematical Methods of Operations Research}, 34(6):441--461.

\bibitem[Ruszczy\'{n}ski, 2002]{R02}
Ruszczy\'{n}ski, A. (2002).
\newblock Probabilistic programming with discrete distributions and precedence
  constrained knapsack polyhedra.
\newblock {\em Mathematical Programming}, 93(2):195--215.

\bibitem[Sen, 1992]{S92}
Sen, S. (1992).
\newblock Relaxations for probabilistically constrained programs with discrete
  random variables.
\newblock {\em Operations Research Letters}, 11(2):81--86.

\bibitem[Song et~al., 2014]{Song}
Song, Y., Luedtke, J., and K\"u\c{c}\"ukyavuz, S. (2014).
\newblock Chance-constrained binary packing problems.
\newblock {\em INFORMS Journal on Computing}, 26:735--747.

\bibitem[Song and Luedtke, 2013]{SL13}
Song, Y. and Luedtke, J.~R. (2013).
\newblock Branch-and-cut approaches for chance-constrained formulations of
  reliable network design problems.
\newblock {\em Mathematical Programming Computation}, 5(4):397--432.

\bibitem[Wagelmans et~al., 1992]{nlogn}
Wagelmans, A., van Hoesel, S., and Kolen, A. (1992).
\newblock Economic lot sizing: An \text{O($n \log n$)} algorithm that runs in
  linear time in the \text{Wagner-Whitin} case.
\newblock {\em Operations Research}, 40:S145--S156.

\bibitem[Wagner and Whitin, 1958]{on2}
Wagner, H.~M. and Whitin, T. (1958).
\newblock Dynamic version of the economic lot size model.
\newblock {\em Management Science}, 5:89--96.

\bibitem[Wang et~al., 2012]{wind}
Wang, S., Guan, Y., and Wang, J. (2012).
\newblock A chance-constrained two-stage stochastic program for unit commitment
  with uncertain wind power output.
\newblock {\em IEEE Transactions on Power Systems}, 27:206--215.

\bibitem[Zhang et~al., 2014]{Minjiao}
Zhang, M., K\"u\c{c}\"ukyavuz, S., and Goel, S. (2014).
\newblock A branch-and-cut method for dynamic decision making under joint
  chance constraints.
\newblock {\em Management Science}, 60(5):1317--1333.

\end{thebibliography}

	\appendix
	
	\section{Valid inequalities that involve stock variables}  \label{sec:stock}
	In this section, we study the polyhedral structure of the deterministic equivalent formulation which includes the stock variables. Let 
	$		P_+  =   \{ {(\mathbf x, \mathbf y, \mathbf z, \vs}) \;|\; \eqref{eq:dep1} - \eqref{eq:dep6} \}.$
	
	\begin{proposition} 
		For $\ell = 2, \dots, n$,    let $T_\ell := \{t_{\ell(1)}, t_{\ell(2)}, \hdots , t_{\ell(a_\ell)}\}\subseteq T_{\ell}^*$, where 
		$D_{t_{\ell(1)}} \geq D_{t_{\ell(2)}} \geq \cdots \geq D_{t_{\ell(n)}} $. For $j\in \Omega$, the  inequalities
		\begin{align} \label{eq:news}
			s_{j(\ell-1)}  + &(D_{t_{\ell(1)}} - D_{j\ell-1})x_{\ell} + \sum_{p = 1}^{a_{\ell}}  (D_{t_{\ell(p)}} - D_{t_{\ell (p + 1) }}) z_{t_{\ell(p)}} \geq   D_{t_{\ell(1)}} - D_{j\ell-1},
		\end{align}
		are valid for $P_+$. 
	\end{proposition}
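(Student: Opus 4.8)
The plan is to verify validity by a case analysis on the binary value of the single setup variable $x_\ell$ that appears in \eqref{eq:news}, reducing the claim to the already-established mixing inequality \eqref{eq:mixing} together with the defining constraint \eqref{eq:dep5} for the inventory variables.

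First I would dispose of the case $x_\ell = 1$. Here the term $(D_{t_{\ell(1)}} - D_{j\ell-1})x_\ell$ on the left equals exactly the right-hand side $D_{t_{\ell(1)}} - D_{j\ell-1}$, so after cancellation the inequality reduces to $s_{j(\ell-1)} + \sum_{p=1}^{a_\ell}(D_{t_{\ell(p)}} - D_{t_{\ell(p+1)}})z_{t_{\ell(p)}} \geq 0$. This holds trivially: $s_{j(\ell-1)} \geq 0$ by \eqref{eq:dep6}, and each coefficient $D_{t_{\ell(p)}} - D_{t_{\ell(p+1)}}$ is nonnegative because the scenarios in $T_\ell$ are indexed in order of nonincreasing cumulative demand, with $t_{\ell(a_\ell+1)} = \sigma_{\ell(k+1)}$ being the $(k+1)$-st largest, so the final coefficient is nonnegative as well.

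The substantive case is $x_\ell = 0$. Then \eqref{eq:dep4} forces $y_\ell \leq M_\ell x_\ell = 0$, hence $y_\ell = 0$ and $\sum_{i=1}^\ell y_i = \sum_{i=1}^{\ell-1} y_i$. Now I would invoke the mixing inequality \eqref{eq:mixing} for period $\ell$ (valid for $P$, hence for the projection of $P_+$ onto the $(\vx,\vy,\vz)$-space) to obtain $\sum_{i=1}^{\ell-1} y_i + \sum_{p=1}^{a_\ell}(D_{t_{\ell(p)}} - D_{t_{\ell(p+1)}})z_{t_{\ell(p)}} \geq D_{t_{\ell(1)}}$. Separately, the inventory constraint \eqref{eq:dep5} at $t = \ell - 1$ gives $\sum_{i=1}^{\ell-1} y_i \leq s_{j(\ell-1)} + D_{j\ell-1}$. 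Substituting this upper bound on $\sum_{i=1}^{\ell-1}y_i$ into the mixing inequality and rearranging yields $s_{j(\ell-1)} + \sum_{p=1}^{a_\ell}(D_{t_{\ell(p)}} - D_{t_{\ell(p+1)}})z_{t_{\ell(p)}} \geq D_{t_{\ell(1)}} - D_{j\ell-1}$, which is exactly \eqref{eq:news} when $x_\ell = 0$.

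The only real obstacle is recognizing the correct coupling between the two ingredients: the mixing inequality is stated in terms of the cumulative production $\sum_{i=1}^\ell y_i$ \emph{through} period $\ell$, whereas \eqref{eq:news} carries the inventory $s_{j(\ell-1)}$ at the \emph{end} of period $\ell - 1$. Setting $x_\ell = 0$ is precisely what collapses production through period $\ell$ to production through period $\ell - 1$, after which \eqref{eq:dep5} converts the cumulative-production lower bound into a statement about the inventory term. I would therefore emphasize this bridge; the remaining manipulations are routine substitutions.
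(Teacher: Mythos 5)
Your proof is correct and follows essentially the same route as the paper's: a case split on $x_\ell$, with the $x_\ell=1$ case being trivial and the $x_\ell=0$ case combining $y_\ell=0$, the inventory bound $s_{j(\ell-1)} \geq \sum_{p=1}^{\ell} y_p - D_{j(\ell-1)}$, and the mixing inequality \eqref{eq:mixing} for period $\ell$. Your write-up is somewhat more explicit than the paper's (which compresses the same two ingredients into one sentence), but there is no substantive difference.
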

	\begin{proof}
		If $x_{\ell} = 1$, then inequality \eqref{eq:news} is trivially satisfied. Otherwise, $y_{\ell} = 0$. Because 
		$s_{j(\ell-1)}  \geq \sum_{p = 1}^{\ell - 1} y_p  - D_{j(\ell-1)}
		= \sum_{p = 1}^{\ell}  y_p  - D_{j(\ell-1)}$,
		the validity of inequality \eqref{eq:news} follows from the validity of the  mixing inequality \eqref{eq:mixing} for  time period $\ell$. 
	\end{proof}

	\renewcommand{\theexample}{\ref{ex:ex1}}
	\begin{example} (Continued.) Let $\ell = 2, j=1$ and $T_\ell = \{ 3, 4 \}$, then  we obtain:	
		\begin{align*}	
			&s_{11} + (D_{t_{2(1)}} - D_{t_{1(1)}}) x_2 +  (D_{t_{2(1)}} - D_{t_{2(2)}}) z_3 +  (D_{t_{2(2)}} - D_{t_{2(3)}}) z_4  \geq (D_{t_{2(1)}} - D_{t_{1(1)}}), 
		\end{align*}
		which is equivalent to:
		$$s_{11} + 5x_2  + z_3 + z_4 \geq 5.  $$
		In fact, this inequality is a facet-defining inequality for this problem, as we  show in Proposition \ref{pro:1}.
	\end{example}

	Next, we show the strength of the proposed inequalities \eqref{eq:news}.
	
	\begin{proposition}	\label{pro:1}
		For $\ell = 2, \dots, n$ and $T_\ell\subseteq T_\ell^*$, if $\sigma_{\ell - 1(1)} \not \in T^*_\ell \cup \{\sigma_{\ell (k + 1)}\}$, $j =\sigma_{\ell - 1 (1)}$  and $t_{\ell{(1)}} = \sigma_{\ell(1)}$, then inequality \eqref{eq:news} is facet-defining for $conv(P_+)$. 
	\end{proposition}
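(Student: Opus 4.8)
The plan is to use the direct construction from the proof of Proposition~\ref{prop:eq1facet}: first identify $dim(conv(P_+))$, then exhibit that many affinely independent feasible points at which \eqref{eq:news} holds with equality. For the dimension, note that $conv(P_+)$ projects onto $conv(P)$, which has dimension $2n+m-1$, its affine hull being cut out by $x_1=1$ (positive demands force production in period~$1$). Over the relative interior of this projection the fiber $\{\vs:\eqref{eq:dep5},\eqref{eq:dep6}\}$ is an $mn$-dimensional unbounded polyhedron, since each $s_{jt}$ is bounded only from below and may be increased freely. Hence $dim(conv(P_+))=2n+m+mn-1$, and it suffices to produce $2n+m+mn-1$ tight, affinely independent points.

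Two tightness regimes drive the construction. When $x_\ell=0$ we have $y_\ell=0$, and, exactly as in the validity proof, equality in \eqref{eq:news} forces $s_{j(\ell-1)}$ down to its bound $\sum_{i=1}^{\ell-1}y_i-D_{j(\ell-1)}$ together with tightness of the period-$\ell$ mixing inequality \eqref{eq:mixing}. When $x_\ell=1$, equality forces $s_{j(\ell-1)}=0$ and $z_{t_{\ell(p)}}=0$ for all $p$. Both regimes are nonempty because $D_{t_{\ell(1)}}-D_{j(\ell-1)}=D_{\sigma_{\ell(1)}}-D_{\sigma_{\ell-1(1)}}\ge0$, cumulative demands being nondecreasing in~$t$. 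A convenient base point is $(\ve_1+\ve_{\ell+1},\ \mathbf{\bar y}_1^\ell+\ve_{\ell+1}M_{\ell+1},\ \vo,\ \vs^0)$ with $\vs^0$ the induced minimal inventory; the surplus $M_{\ell+1}$ produced in period~$\ell+1$ covers all later demand while leaving $s_{j(\ell-1)}$ untouched, and for $\ell=n$ the $\ve_{\ell+1}$ terms are simply omitted (period-$1$ production alone then covers every scenario).

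I would then assemble four families. (I) The mixing ladder at period~$\ell$, namely $(\ve_1+\ve_{\ell+1},\ \mathbf{\bar y}_p^\ell+\ve_{\ell+1}M_{\ell+1},\ \sum_{q=1}^{g(t_{\ell(p)})-1}\ve_{\sigma_{\ell(q)}},\ \cdot)$ for $p\in[a_\ell+1]$, giving $a_\ell+1$ points that co-vary $y_1$ with the coefficients $z_{t_{\ell(p)}}$. (II) For each $j'\in\Omega\setminus T_\ell$ a point with $z_{j'}=1$ on the base production, giving $m-a_\ell$ points and spanning the remaining $z$-directions. (III) For each $(j',t)\neq(\sigma_{\ell-1(1)},\ell-1)$ the base point with $s_{j't}$ raised by some $\triangle>0$; since these coordinates have zero coefficient in \eqref{eq:news} and $\vs$ is free to increase, these $mn-1$ points stay feasible and tight. (IV) Production and setup perturbations for periods $i\neq\ell$, obtained as pairs of points that turn on $x_i$ and shift production while respecting the tightness constraint (on the slice $x_\ell=0,\ z=\vo$ this keeps $\sum_{i=1}^{\ell-1}y_i=D_{t_{\ell(1)}}$), together with the $x_\ell=1$ branch point $(\ve_1+\ve_\ell,\ \ve_1 D_{\sigma_{\ell-1(1)}}+\ve_\ell M_\ell,\ \vo,\ \cdot)$ and a companion that varies $y_\ell$ freely; these supply the remaining $2n-1$ points. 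The totals add to $(a_\ell+1)+(m-a_\ell)+(mn-1)+(2n-1)=2n+m+mn-1$, as required.

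The argument hinges on two points. Conceptually, the hypotheses are precisely what make Family~(I) tight: since $\sigma_{\ell-1(1)}\notin T^*_\ell\cup\{\sigma_{\ell(k+1)}\}$, scenario $j=\sigma_{\ell-1(1)}$ is ranked at least $(k{+}2)$-th at period~$\ell$, so $D_{j(\ell-1)}\le D_{j\ell}\le D_{\sigma_{\ell(k+1)}}\le D_{t_{\ell(p)}}$ for every rung~$p$, which guarantees $s_{j(\ell-1)}=D_{t_{\ell(p)}}-D_{j(\ell-1)}\ge0$ so that equality in \eqref{eq:news} is actually attained at each rung; meanwhile $t_{\ell(1)}=\sigma_{\ell(1)}$ fixes both the coefficient of $x_\ell$ and the right-hand side. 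Technically, the main obstacle is the bookkeeping forced by the $mn$ stock variables: I must check that the single critical coordinate $s_{\sigma_{\ell-1(1)},\ell-1}$ stays pinned to its tight value throughout, while every other stock coordinate, every $z_j$, and every admissible $(\vx,\vy)$ perturbation moves independently, and then verify affine independence despite the coupling between $\vy$ and the induced inventory $\vs$. Arranging the four families so that the corresponding difference vectors form a block-triangular system, and thereby confirming this affine independence, is the step I expect to be the most laborious.
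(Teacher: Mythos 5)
Your proposal follows essentially the same route as the paper: the dimension count $2n+m+mn-1$, the four point families with cardinalities $a_\ell+1$, $m-a_\ell$, $mn-1$, and $2n-1$, and the use of the hypothesis $\sigma_{\ell-1(1)}\notin T_\ell^*\cup\{\sigma_{\ell(k+1)}\}$ to guarantee $s_{j(\ell-1)}\ge 0$ along the mixing ladder all match the paper's argument, which differs only cosmetically in that it first proves the facet for the restricted polyhedron containing the single stock variable $s_{j(\ell-1)}$ and then lifts via the same $mn-1$ stock perturbations you describe. The one place you stop short --- the explicit enumeration of the $2n-1$ production/setup points in your family (IV) --- is exactly where the paper supplies concrete points (the $x_\ell=1$ branch with $y_1=D_{\sigma_{\ell-1(1)}}$, paired perturbations for periods before and after $\ell$, and one extra point with $z_{\sigma_{\ell-1(1)}}=1$), so your sketch is correct in approach but that bookkeeping remains to be filled in, as you yourself anticipate.
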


	\begin{proof} 
		First, we show that under the conditions stated in Proposition \ref{pro:1},  inequality \eqref{eq:news} is facet-defining for the convex hull of the  polyhedron:
		$		P_{s_{j(\ell - 1)}}  =   \{ ({\mathbf x, \mathbf y, \mathbf z, s_{j(\ell - 1)}}) \ \in B^n \times  \mathbb R_+^n \times B^m \times \mathbb R_+\;|\; 
		s_{j(\ell - 1)} \geq \sum_{p = 1}^{\ell - 1} y_p -  D_{j(\ell-1)}, \eqref{eq:dep1} - \eqref{eq:dep4}\},	
		$	
		in which we only consider the stock variable for  scenario $j = \sigma_{\ell - 1(1)}$ at time period $\ell -1$.	To show that inequality \eqref{eq:news} is facet-defining for $conv(P_{s_{j(\ell - 1)}}  )$, we need to find $dim(P_{s_{j(\ell - 1)}})= 2n + m$ affinely independent points $(\mathbf x, \mathbf y, \mathbf z, s_{j(\ell-1)})$ that satisfy inequality \eqref{eq:news} at equality. 
		
		Let $g(t_{i(p)})$, for all $t_{i(p)} \in T_i$ and $i+1 \in \bar S\cup\{\ell+1\}$,  be a unique mapping such that the scenario $t_{i(p)}$ has the $g(t_{i(p)})$-th largest cumulative demand at time period $i$.	We first consider the following set of feasible points: 
		\begin{align*}
			& (\mathbf e_1 + \ve_{\ell+ 1}, \mathbf{\bar y}_{p}^\ell + \ve_{\ell+ 1} M_{\ell + 1}, \sum_{i = 1}^{g(t_{\ell(p)}) - 1} \mathbf {e}_{\sigma_{\ell(i)}},    \bar y^\ell_{p1} -  D_{\sigma_{\ell - 1(1)}}), \quad p  \in [a_\ell + 1],
		\end{align*}
		where $\bar \vy^q_p$ is defined in the proof of Proposition \ref{prop:eq1facet}.	To see the feasibility of these points, note that if $\sigma_{\ell - 1(1)} \not \in T^*_\ell \cup \{\sigma_{\ell (k + 1)}\}$,  we must have $\bar y_{a_{\ell} + 1,1}^\ell -  D_{\sigma_{\ell - 1(1)}} \geq 0 $. Hence, we obtain $ a_\ell+1$ affinely independent points that satisfy inequality \eqref{eq:news} at equality. 
		
		Next,  consider the following set of points:
		\begin{align*}
			& (\mathbf e_1 + \ve_{\ell + 1}, \mathbf{\bar y}_{1}^\ell + \ve_{\ell+ 1} M_{\ell + 1}, \mathbf {e}_p , \bar y^\ell_{11} -  D_{\sigma_{\ell - 1(1)}}), \quad \forall p = \Omega \setminus  T_\ell .
		\end{align*}
		These $ m - a_\ell$ points are feasible, affinely independent from all other points, and satisfy inequality \eqref{eq:news} at equality.	
		Next, we consider the following set of points:
		\begin{align*}
			& (\mathbf e_1 + \ve_{\ell }, \mathbf{\bar y}_{1}^{\ell - 1} + \ve_{\ell} M_\ell, \mathbf 0, 0),   \\
			& (\mathbf e_1 + \ve_{\ell} + \ve_{p }, \mathbf{\bar y}_{1}^{\ell - 1 } + \ve_{\ell }  M_\ell, \mathbf 0,  0),  p \in N \setminus [\ell ],\\ 		
			& (\mathbf e_1 + \ve_{\ell } + \ve_{p }, \mathbf{\bar y}_{1}^{\ell - 1} + \ve_{\ell}  M_\ell + \ve_{p} \triangle, \mathbf 0, 0 ),  p \in N \setminus [\ell],
		\end{align*}
		where $ 0<\triangle < M_p$, for all $p \in N \setminus [\ell] $. 	These $2(n - \ell)+1$ points are feasible, affinely independent  from all other points, and satisfy inequality \eqref{eq:news} at equality.
		
		Next, we consider the following set of points:	
		\begin{align*}
			& (\mathbf e_1 + \ve_{\ell } + \ve_{\ell+ 1 }, \mathbf{\bar y}_{1}^{\ell - 1 } + \ve_{\ell} ( M_\ell - \triangle_1) + \ve_{\ell + 1} M_{\ell + 1}, \mathbf 0, 0),  \\		
			& (\mathbf e_1 + \ve_{p } + \ve_{\ell }, \mathbf{\bar y}_{1}^{\ell -1 } + \ve_{\ell} M_\ell , \mathbf 0, 0 ),  p \in [\ell - 1] \setminus \{1\},\\	
			& (\mathbf e_1 + \ve_{p } + \ve_{\ell }, \mathbf{\bar y}_{1}^{\ell - 1  } + \ve_{\ell} M_\ell + \triangle_2 (\ve_p - \ve_1) , \mathbf 0, 0), p \in [\ell - 1] \setminus \{1\},
		\end{align*}
		where $ 0<\triangle_1 \leq \bar y^{\ell - 1}_{11}$, and $0<\triangle_2\le \min\{ \bar y^{\ell - 1}_{11} - D_{\sigma_{1(1)}},M_p\}$, for all $p \in [\ell - 1] \setminus \{1\}$.  It is easy to see that these $2\ell - 3$ points are feasible, affinely independent from other points, and satisfy inequality \eqref{eq:news} at equality. 
		Finally, consider  the  feasible  point:
		$			 (\mathbf e_1 + \ve_{\ell } , \mathbf{y}^{*} + \ve_{\ell} M_\ell, \ve_{\sigma_{\ell - 1(1)}}, 0),$	
		where $y^{*}_1 = D_{\sigma_{\ell - 1(2)}}$, and $y^{*}_i = 0$, for all $i = 2, \dots, n$.   
		\exclude{
			It is easy to see that this point is feasible \comment{Do we have to add some sentences to show why this point is feasible? The idea is that we let $y^*_1$ equals to the second largest scenario in period $\ell - 1$, if $y^*_1 $ is less than the largest scenario in period $\ell - 2$, then the largest scenario in time period $\ell - 2$ must also be the largest scenario in period $\ell - 1$, because $y^*_1$ equals to the second largest scenario in period $\ell - 1$. since we let $z_{\sigma_{\ell - 1}(1)} = 1$, then this $y^*_1$ only has to be greater than or equal to the second largest scenario in the time period $\ell - 2$, which is true, since demand in each time period is nonnegative, the second largest scenario in the time period $\ell - 1$ must be greater than or equal to the second largest scenario in time period $\ell - 2$.   }, }
		This point is affinely independent from all other points, and satisfies inequality \eqref{eq:news} at equality.  Hence, we have  $2n + m$ affinely independent points that satisfy inequality \eqref{eq:news} at equality, which shows that the proposed inequality is facet-defining for $conv(	P_{s_{j(\ell - 1)}} )$.
		
		To show that the proposed inequality is also facet-defining for  $conv(P_+)$, let:
		$ (\tilde \vx^p, \tilde \vy^p, \tilde \vz^p , \tilde s^p_{j(\ell-1)}),  p \in  [ 2n + m ],$
		be the affinely independent points constructed for $conv(P_{s_{j(\ell - 1)}}  )$. Then, we construct the  set of points:
		$ (\tilde \vx^p, \tilde \vy^p, \tilde \vz^p , \tilde \vs^p ),  p \in  [ 2n + m ],$
		where $\tilde s^p_{qi} = \max \{\sum_{u = 1}^i \tilde y^p_u - D_{qi} \;, \; 0 \} $ for $q\in \Omega, i\in N$. These ``extended'' points are feasible, affinely independent, and satisfy  inequality \eqref{eq:news} at equality. 		
		Finally, for each inventory variable $s_{pi}$ such that $p \not = \sigma_{\ell - 1 (1)}$ or $i \not = \ell - 1$, we construct the  set of points:
		$		(\tilde \vx^1, \tilde \vy^1, \tilde \vz^1 , \tilde \vs^1 ) + (\mathbf 0, \mathbf 0, \mathbf 0, \triangle \ve_{pi}),  p \not = \sigma_{\ell - 1 (1)}, i \not = \ell - 1,$
		where $\triangle > 0$, and  $\ve_{pi}$ is an $m\times n$ dimensional matrix such that the  $(p,i)$-th entry  equals 1, and all other entries are 0.  These $nm - 1$ points are feasible, affinely independent from other points, and satisfy  inequality \eqref{eq:news} at equality.  
		Hence, we obtain $2n + m + mn - 1$  feasible, affinely independent points that satisfy  inequality \eqref{eq:news} at equality, which completes the proof. \qed
	\end{proof}

	\noindent \textbf{Separation of inequalities \eqref{eq:news}}: 
	Given a fractional solution of the deterministic equivalent formulation $(\hat\vx, \hat\vy, \hat\vz, \hat\vs)$, we solve the problem \eqref{eq:sep2} to obtain $\hat Y_i, i\in N\setminus \{1\}$.
	\exclude{
		\begin{align} \label{eq:seps}
			W_\ell = \; \min_{T_\ell \subseteq T_\ell^*} \{ \; (D_{t_{\ell(1)}} - D_{j\ell - 1})\hat x_{\ell} + \sum_{p = 1}^{a_{\ell}}  (D_{t_{\ell}(p)} - D_{t_{\ell (p + 1) }}) \hat z_{t_{\ell(p)}} \}, 
		\end{align}
		which is identical to the separation of the mixing inequalities,  which takes $O(nk\log(k))$ time \cite[]{Gunluk}. 
	}
	Then, with a linear pass, we   add the violated inequality \eqref{eq:news} for $\ell\in N\setminus \{1\}, j=\sigma_{\ell-1(1)},$ if  $\hat s_{j(\ell-1)}  +(D_{t_{\ell(1)}} - D_{j(\ell - 1)})\hat x_{\ell} + \hat Y_\ell< D_{t_{\ell(1)}} - D_{j(\ell - 1)}$. Otherwise, there is no violated inequalities \eqref{eq:news}. The overall running time is $O(nk\log(k))$. In addition, since we consider a single time period at a time, the separation procedure is exact.

	\section{A Benders decomposition algorithm} \label{sec:Benders}
	
	There are $m  n$  stock variables, which could cause computational difficulty as the size of the problem increases. In this section, we study a Benders decomposition algorithm. Let $\theta_j$, for all $j \in \Omega$, represent the additional variable that captures the second-stage cost of scenario $j$. The relaxed master problem is
		\begin{align*}
			\textbf{MASTER:} \;	   	\min \; &\vf^\top \vx + \vc^\top \vy + \frac{1}{m} \sum_{j = 1}^m \pi_j  \theta_j \\
				\text{s.t.}\; & \sum_{i = 1}^t y_i \geq \sum_{i = 1}^t d_{ji}(1 - z_j), \; & t \in N, j \in \Omega \\
				&\sum_{j = 1}^m  z_j \leq k \\
					&y_i \leq M_i x_i, & i \in N \\
					&\theta\in \mathbb R^m_+ , \vy  \in \mathbb R_+^n,\vx \in \mathbb B^n,  \vz \in \mathbb B^m,
		\end{align*}	
	where we relax the constraint \eqref{eq:dep5} which captures the second-stage cost of each scenario. Note that  the first class of proposed inequalities \eqref{eq:newvalid} is valid for the master problem. However, since the second class of valid inequalities \eqref{eq:news} involves the stock variables, it cannot be directly applied to the master problem. For each $j \in \Omega$, the subproblem is stated as:
		\begin{align*}
			\theta_j= \;	\min \; &  \vh^\top \vs_j  \\
			& s_{ji} \geq \sum_{i = 1}^t(y_i - d_{ji}), \quad    t \in N & (\gamma_{ji})\\
			&\vs  \in \mathbb R_+^n,
		\end{align*}
	where $\gamma_{ji}$ is the dual variable associated with $i$-th time period of $j$-th scenario. Next, the corresponding dual variable for $j$-th scenario is stated as:
	\begin{subequations}
		\label{eq:duals}
		\begin{align}
			\theta_j \geq \;& 	\max \sum_{i = 1}^n \Big(\sum_{t = 1}^i(y_t - d_{jt}) \Big) \gamma_{ji}    \\
			\label{eq:d1}& \gamma_{ji} \leq h_i, & i \in N ,\\
			\label{eq:d2}& \mathbf\gamma_j \in \mathbb R_+^{n} .
		\end{align}
	\end{subequations}
	Note that according to \cite{merve}, we can apply the second class of  valid inequalities \eqref{eq:news} to the subproblems, to further strengthen the quality of the Benders optimality cuts added to the master problem. However, this implementation  did not lead to improvements in solution time for our test instances, hence we do not report experiments with this version of Benders in our computational study in Section \ref{sec:comp}. 
	
	Given a first stage solution $(\hat y, \hat \theta)$, instead of solving the dual problem \eqref{eq:duals} as a linear problem, we can take advantage of the special structure of \eqref{eq:duals} and generate Benders optimality cuts in $O(n)$ time: for each $i \in N$, if the term $\sum_{t = 1}^i(y_t - d_{jt}) < 0$, then $\gamma_{ji} = 0$, because of the nonnegativity of $h_i$, for all $i \in N$. Otherwise, $\gamma_{ji} = h_i$. Let $\gamma^*_j$ be the optimal dual solution for scenario $j$, if $\hat \theta_j < \sum_{i = 1}^n \Big(\sum_{t = 1}^i(y_t - d_{jt}) \Big) \gamma^*_{ji} $, then we add the following optimality cut to the master problem:
	\begin{align*}
		\theta_j \geq \sum_{i = 1}^n \Big(\sum_{t = 1}^i(y_t - d_{jt}) \Big) \gamma^*_{ji} ,
	\end{align*} 
	to cut off the suboptimal solution.
	
	However,  although we can solve the subproblem in $O(n)$ time, there are an exponential number of possible Benders optimality cuts for each scenario. As  shown in our computational study in  Section \ref{sec:comp}, as the number of time periods ($n$) grows, the Benders decomposition algorithm becomes ineffective.

\end{document}